\crefname{equation}{Equation}{Equations}
\crefname{conjecture}{Conjecture}{Conjectures}
\crefname{figure}{Figure}{Figures}
\crefname{thm}{Theorem}{Theorems}
\crefname{theorem}{Theorem}{Theorems}
\newtheorem{thm}{Theorem}
\numberwithin{thm}{section} 
\newtheorem{theorem}[thm]{Theorem}
\newtheorem{lemma}[thm]{Lemma}
\newtheorem{cor}[thm]{Corollary}
\newtheorem{prop}[thm]{Proposition}
\theoremstyle{definition}
\newtheorem{defn}[thm]{Definition}
\theoremstyle{remark}
\newtheorem{rem}[thm]{Remark}
\newtheorem{example}[thm]{Example}
\newcommand{\Col}[2][I(n)]{\ensuremath{Col\begin{pmatrix}#1\\#2\end{pmatrix}}}
\newcommand{\e}[1]{\ensuremath{E_{#1}}}
\newcommand{\Fl}{\ensuremath{\mathcal Fl}}
\newcommand{\bo}{\ensuremath{\mathfrak b}}
\newcommand{\g}{\ensuremath{\mathfrak g}}
\newcommand{\p}{\ensuremath{\mathfrak p}}
\newcommand{\tr}{\ensuremath{\operatorname{tr}}}
\newcommand{\IC}{\mathbb C}
\newcommand{\ess}{\mathcal Ess}
\newcommand{\rk}{\operatorname{rk}}
\newcommand{\set}[2]{\left\{#1\,\middle\vert\,#2\right\}}
\begin{document}
\title{Conormal Varieties of Covexillary Schubert Varieties}
\author{Rahul Singh}
\address{ Institute for Computational and Experimental Research in Mathematics, Brown University, Providence, RI 02903 USA }
\email{rahul.sharpeye@gmail.com}

\begin{abstract} 
A permutation is called covexillary if it avoids the pattern $3412$.
We construct an open embedding of a covexillary matrix Schubert variety into a Grassmannian Schubert variety.
As applications of this embedding,
we show that the characteristic cycles of covexillary Schubert varieties are irreducible,
and provide a new proof of Lascoux's model computing Kazhdan-Lusztig polynomials of vexillary permutations.
Combining the above embedding with earlier work of the author on the conormal varieties of Grassmannian Schubert varieties,
we develop an algebraic criterion identifying the conormal varieties of covexillary Schubert and matrix Schubert varieties as subvarieties of the respective cotangent bundles.
\end{abstract}

\maketitle

The conormal varieties of Schubert varieties play a central role in the Springer representations of Weyl groups,
see \cite{MR491988,MR689533,MR1433132}.
The work of Aluffi, Mihalcea, Sh\"urmann, and Su \cite{AMSS:shadows} sheds light on the relation between the torus-equivariant (co)-homology classes of conormal varieties and the ``stable basis'' of Maulik and Okounkov \cite{MR3951025}.
The conormal variety is defined as the closure (in the cotangent bundle) of the conormal bundle of the smooth locus of a variety.
While the conormal bundle of the smooth locus often admits simple descriptions,
it is in general a difficult problem to describe the boundary, singularities, cohomology classes etc. of its closure.
In this paper, we study covexillary Schubert varieties and their conormal varieties.

Recall that a partial permutation is called covexillary if its essential set goes from top-right to bottom-left,
see \cref{covexPP} for details.
Following \cite{fulton:matrixSchubertVarieties}, a permutation, i.e.,
a full-rank partial permutation matrix, is covexillary if it avoids the pattern $3412$.
Lascoux and Sch\"utzenberger \cite{MR660739} (see also Fulton \cite{fulton:matrixSchubertVarieties})
showed that the cohomology class of a covexillary%
\footnote[2]{We use the `homological' indexing, corresponding to $\dim(\Fl_w)=\ell(w)$, where $\ell(w)$ is the length of the word $w$.
In particular, the Schubert varieties that we call covexillary are called vexillary in \cite{MR660739,fulton:matrixSchubertVarieties,MR1354702} etc.}
Schubert variety is a multi-Schur function,
generalizing Giambelli's formula for the cohomology class of a Grassmannian Schubert variety.
More recently, Anderson, Ikeda, Jeon, and Kawago \cite{andersonIkedaJeonKawago}
have shown that covexillary Schubert varieties have the same singularities as Schubert subvarieties of Grassmannians.
A priori, it is not clear why covexillary Schubert varieties behave like Grassmannian Schubert varieties for both these properties (cohomology classes and singularity types).
In this paper, we provide a construction (see \cref{mainEmbed} below) that simultaneously explains both these properties.

Let $B$ denote the set of upper triangular invertible $n\times n$ matrices,
\g\ the set of $n\times n$ matrices,
$w$ a partial permutation matrix,
and $\g_w=\overline{BwB}$ the closure (in \g) of the $B\times B$-orbit of $w$,
i.e., a matrix Schubert variety.
Our first result (\cref{prop:MSVtoGr}) is the construction,
for covexillary $w$,
of an open embedding of $\g_w$ in some Schubert subvariety of the Grassmannian $Gr(n,2n)$.

\begin{theorem}
\label{mainEmbed}
Let $w$ be a covexillary permutation.
Let $h:\g\to Gr(n,2n)$ be the graph embedding, i.e.,
the map given by $x\mapsto Col\begin{pmatrix}I(n)\\x\end{pmatrix}$.
There exists a permutation $\tau\in S_{2n}$ and a Schubert variety $Gr_{\underline v}\subset Gr(n,2n)$
 such that the composite map
\begin{align*}
\tau\circ h:\g\to Gr(n,2n),&&x\mapsto \tau\,Col\begin{pmatrix}I(n)\\x\end{pmatrix}
\end{align*}
induces an open immersion
$\g_w\hookrightarrow Gr_{\underline v}$.
\end{theorem}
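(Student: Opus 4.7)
The strategy is to translate rank conditions on northwest submatrices of $x$ into flag-intersection conditions on the column span $V = h(x) \in Gr(n, 2n)$, and exploit the chain structure of the essential set of a covexillary permutation.

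\emph{Linear-algebra translation.} Writing a general vector of $V$ as $\sum_{j=1}^n a_j \bigl(e_j + \sum_i x_{ij} e_{n+i}\bigr)$ and intersecting with the coordinate subspace
$$F_{(r,c)} := \mathrm{span}(e_1, \ldots, e_c,\, e_{n+r+1}, \ldots, e_{2n}),$$
a direct calculation identifies $V \cap F_{(r,c)}$ with $\ker(x_{[1,r] \times [1,c]})$, yielding
\begin{equation*}
\rk(x_{[1,r] \times [1,c]}) \le k \quad\Longleftrightarrow\quad \dim(V \cap F_{(r,c)}) \ge c - k.
\end{equation*}
Thus each essential rank condition defining $\g_w$ has a canonical flag-intersection translation into $Gr(n,2n)$.

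\emph{Chain structure and choice of $\tau$.} Order the essential positions $(r_s, c_s)$ of $w$ so that $r_1 < r_2 < \cdots < r_m$; covexillarity forces $c_1 > c_2 > \cdots > c_m$. The index sets $I_s := [1, c_s] \cup [n+r_s+1, 2n]$ hence form a strict descending chain $I_1 \supsetneq I_2 \supsetneq \cdots \supsetneq I_m$, and so do the subspaces $F_s := F_{(r_s, c_s)}$. I would take $\tau \in S_{2n}$ to be any permutation sending each $I_s$ bijectively onto the initial segment $\{1, \ldots, |I_s|\}$, which is possible precisely because the $I_s$ form a chain: order the elements of $\{1, \ldots, 2n\}$ by placing $I_m$ first, then $I_{m-1} \setminus I_m$, and so on, ending with the complement of $I_1$, and let $\tau$ be the resulting reordering. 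With this choice, each $\tau \cdot F_s$ is a standard coordinate subspace, and the translations above become standard Schubert conditions on $\tau V$, jointly cutting out a Schubert variety $Gr_{\underline v} \subset Gr(n, 2n)$ whose indexing partition is read off from the triples $(r_s, c_s, k_s)$.

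\emph{Open immersion.} The graph map $h$ is an isomorphism from $\g$ onto the open subset $U_0 := \{V : V \cap \mathrm{span}(e_{n+1}, \ldots, e_{2n}) = 0\}$ of $Gr(n, 2n)$, so $\tau \circ h$ is an isomorphism $\g \xrightarrow{\sim} \tau \cdot U_0$. The translation in the first step shows $(\tau \circ h)^{-1}(Gr_{\underline v}) = \g_w$, so $\tau \circ h$ sends $\g_w$ isomorphically onto the open subset $\tau \cdot U_0 \cap Gr_{\underline v}$ of $Gr_{\underline v}$, which is nonempty as it contains $\tau \circ h(w)$. The principal technical obstacle I anticipate is in the middle step: verifying that extending the chain $F_1 \supsetneq \cdots \supsetneq F_m$ to a full flag introduces no spurious Schubert conditions, and producing a clean explicit description of $\underline v$ from the triples $(r_s, c_s, k_s)$. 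Covexillarity is precisely what makes the image land in a single Grassmannian Schubert variety, rather than in a more complicated intersection of Schubert-like conditions indexed by an antichain.
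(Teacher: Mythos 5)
Your proposal is correct in substance and follows essentially the same route as the paper: a linear-algebra lemma translating each essential rank condition on $x$ into an incidence condition between the graph $h(x)$ and a coordinate subspace, the observation that covexillarity makes these coordinate subspaces nested so that a single permutation $\tau$ carries them all into the standard flag, and the identification of $\g_w$ as the preimage of the resulting Schubert variety under the open immersion $\tau\circ h$. One caveat: the paper's $\g_w=\overline{BwB}$ (both Borels upper triangular) is cut out by rank conditions on \emph{southwest} submatrices, and its covexillary essential set $\{(p_i+1,q_i)\}$ has both coordinates weakly increasing, so your northwest-submatrix/antichain setup is the mirror image of the paper's conventions; the argument transfers verbatim, with your $F_{(r,c)}$ replaced by $\langle e_1,\dots,e_{q_i},e_{n+1},\dots,e_{n+p_i}\rangle$ and the paper's explicit block permutation $\tau$ playing the role of your ordering construction. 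The ``spurious conditions'' obstacle you flag is dealt with in the paper simply by writing the image conditions as $\dim(V+E_{t_i})\le n+p_i+r_i$ with $t_i=p_i+q_i$ weakly increasing, which is exactly the (possibly redundant) standard form of the defining conditions of a Grassmannian Schubert variety $Gr_{\underline v}$, so no separate verification is needed.
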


Let us explain how \cref{mainEmbed} relates the singularities and cohomology classes of covexillary Schubert varieties with those of Grassmannian Schubert varieties.
Let $E_\bullet=(E_1\subset\cdots\subset E_{n-1})$ denote the standard flag,
$
\Fl %=\set{(F_1\subset\cdots\subset F_{n-1}\subset\IC^n)}{\dim F_i=i}
$
the flag variety,
and $\pi:G\twoheadrightarrow\Fl$ the map $g\mapsto gE_\bullet$.
Let $\Fl_w=\overline{BwE_\bullet}$ denote the Schubert variety corresponding to a permutation matrix $w$,
and $G_w:=\pi^{-1}(\Fl_w)$ the pull-back of $\Fl_w$ along $\pi$.
Observe that since $\pi$ is a locally trivial fibration with smooth fibres,
$\Fl_w$ has the same singularities as $G_w$.
Further, $G_w$ is an open subset of $\g_w$.
Now, if $w$ is covexillary,
it follows from \cref{mainEmbed} that $\g_w$ (and hence also $G_w$) can be viewed as an open subset of $Gr_{\underline v}$,
and hence has the same singularities as $Gr_{\underline v}$.

Let $w_0$ be the longest permutation in $S_n$,
and let $T\subset B$ be the set of diagonal matrices in $B$.
Recall that the double Schubert polynomial $\mathfrak S_{w_0w}$,
which computes the $T$-equivariant cohomology class of $\Fl_w$,
is precisely the $T\times T$ multidegree of $\g_w$ in $\g$,
i.e., the localization of the $T\times T$-equivariant cohomology class of $\g_w$ at $0$.
Let $T_{2n}$ denote the set of diagonal matrices in $GL_{2n}$, acting on $Gr(n,2n)$ by left multiplication.
The embedding $\g_w\to Gr_{\underline v}$ sends $0\in\g_w$ to $\tau\in Gr(n,2n)$
and is equivariant for an appropriate identification $T\times T\overset\sim\to T_{2n}$
(the precise identification depends on the word $w$).
This relates the $T\times T$ multidegree of $\g_w$ in $\g$,
i.e., the double Schubert polynomial $\mathfrak S_{w_0w}$,
with the $T_{2n}$-equivariant localization of $Gr_{\underline v}$ at the point $\tau$,
which is a multi-Schur function.

The fact that covexillary Schubert varieties have the same singularities as Grassmannian Schubert varieties yields some immediate consequences.
The characteristic cycle of the IC sheaf of a Schubert variety $\Fl_w$ is irreducible
if and only if for each $T$-fixed point $v$,
the local Euler obstruction of $\Fl_w$ at $v$ equals the evaluation of the
Kazhdan-Lusztig polynomial \cite{KL:topological,MR573434} evaluated at $1$.
Both the Kazhdan-Lusztig polynomials and the Euler obstructions are local invariants,
and hence we obtain the following result (\cref{mainChar}).
\begin{theorem}
\label{introChar}
The characteristic cycle of the IC sheaf of a covexillary Schubert variety is irreducible.
\end{theorem}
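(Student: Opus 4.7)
The plan is to reduce the irreducibility of the characteristic cycle of $\mathrm{IC}(\Fl_w)$ to a purely local comparison, then transport everything through the open embedding of \cref{mainEmbed} to a statement about Grassmannian Schubert varieties, where the result is already known.

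First, I would recall the criterion stated just before the theorem: writing $CC(\mathrm{IC}(\Fl_w)) = \sum_{v \le w} m_{v,w}\,\overline{T^{*}_{C_v}\Fl}$, the cycle is irreducible (i.e.\ only the conormal to the open cell $C_w$ appears) precisely when, for every $T$-fixed point $v \le w$, the local Euler obstruction $\mathrm{Eu}_v(\Fl_w)$ equals the Kazhdan--Lusztig value $P_{v,w}(1)$. Both quantities are local analytic invariants: the Euler obstruction is intrinsic to the germ of the singularity, and $P_{v,w}(1)$ is the Euler characteristic of the stalk of $\mathrm{IC}(\Fl_w)$ at the fixed point $v$, which again depends only on the germ.

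Second, I would apply \cref{mainEmbed}. For covexillary $w$ the matrix Schubert variety $\g_w$ sits as an open subvariety of a Grassmannian Schubert variety $Gr_{\underline v} \subset Gr(n,2n)$. The pullback $G_w = \pi^{-1}(\Fl_w)$ is open in $\g_w$, and since $\pi$ is a locally trivial fibration with smooth fibres, $\Fl_w$ has the same local analytic singularities as $G_w$, hence the same as the image of $G_w$ inside $Gr_{\underline v}$. In particular, for every $T$-fixed point $v \le w$ of $\Fl_w$, the image under the composed embedding is a $T_{2n}$-fixed point of $Gr_{\underline v}$ (using the equivariance described after \cref{mainEmbed}) with an isomorphic analytic germ. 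Consequently $\mathrm{Eu}_v(\Fl_w)$ and $P_{v,w}(1)$ agree with the corresponding invariants on $Gr_{\underline v}$.

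Third, I would invoke the theorem of Bressler--Finkelberg--Lunts (and independently Boe--Fu) that the characteristic cycles of the IC sheaves of Schubert varieties in Grassmannians are irreducible, which is precisely the equality of local Euler obstructions and $P(1)$-values for each $T_{2n}$-fixed point of $Gr_{\underline v}$. Transporting this equality back through the embedding yields $\mathrm{Eu}_v(\Fl_w) = P_{v,w}(1)$ for every $v \le w$, establishing irreducibility.

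The main obstacle is making sure the correspondence of fixed points is complete and compatible: one must verify that every $T$-fixed point $v \le w$ of $\Fl_w$ actually lands in the image of the open immersion $\g_w \hookrightarrow Gr_{\underline v}$ (so that the local germs really do coincide, not just up to a smooth factor whose effect on $\mathrm{Eu}$ and on the IC stalk must be tracked), and that the equivariance of the embedding is strong enough to pass between the two families of local invariants. Once the dictionary of fixed points is set up, the rest is essentially automatic.
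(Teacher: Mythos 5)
Your proposal is correct and follows essentially the same route as the paper: reduce to the pointwise criterion that the local Euler obstruction equals the Kazhdan--Lusztig value at $1$, note both are local invariants, transport the singularities of $\Fl_w$ through $\pi$ and the open immersion of \cref{prop:MSVtoGr} into a Grassmannian Schubert variety, and quote Bressler--Finkelberg--Lunts (or Boe--Fu). Your worry about $T$-fixed points landing on $T_{2n}$-fixed points is not actually needed, since both invariants are constant along Schubert cells (and unchanged by the smooth fibre of $\pi$), so the equality may be checked at arbitrary points of each stratum inside $Gr_{\underline v}$.
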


In \cite{MR646823}, Lascoux and Sch\"utzenberger gave a combinatorial model
to compute the Kazhdan-Lusztig polynomial of a Grassmannian Schubert variety.
Later, Lascoux \cite{MR1354702} extended this to covexillary Schubert varieties,
showing that the Kazhdan-Lusztig polynomials of a covexillary Schubert variety
are the same as certain Kazhdan-Lusztig polynomials of some Grassmannian Schubert variety.
Since the Kazhdan-Lusztig polynomials are local invariants,
\cref{mainEmbed} gives an alternate proof of Lascoux's result.
Further, by \cref{introChar}, the Lascoux-Sch\"utzenberger model is also an effective algorithm 
for computing the Euler obstructions of covexillary Schubert varieties.

Finally, we turn our attention to the conormal variety $N^*\Fl_w$ of $\Fl_w$ in $\Fl$,
which we relate to the conormal variety $N^*Gr_{\underline v}$ of $Gr_{\underline v}$ in $Gr(n,2n)$ via the commutative diagram in \cref{mainDiag}.
Here $f_{\#}:T^*M\to T^*N$ denotes the open immersion induced on cotangent bundles
from an open immersion $f:M\to N$ of smooth varieties.
The existence of a map $\pi':N^*G_w\twoheadrightarrow N^*\Fl_w$
is a consequence of the smoothness of the map $\pi:G\to\Fl$, see \cite{HTT}.

\begin{equation}
\label{mainDiag}
\begin{tikzcd}
N^*\Fl_w\arrow[d]\arrow[dr,phantom,"\times"]     & N^*G_w\arrow[d]\arrow[l,"\pi'"',->>]\arrow[dr,phantom,"\times"]\arrow[r,hook,"\iota_{\#}"]\arrow[d] & N^*\g_w\arrow[d]\arrow[r,"\tau_{\#}\circ h_{\#}"]\arrow[dr,phantom,"\times"] & N^*Gr_{\underline v}\arrow[d]\\
\Fl_w\arrow[d,hook]  \arrow[dr,phantom,"\times"] & \arrow[l,"\pi"',->>]G_w\arrow[d,hook] \arrow[dr,phantom,"\times"]\arrow[r,hook,"\iota"]             & \g_w\arrow[d,hook]\arrow[r,hook,"\tau\circ h"]\arrow[dr,phantom,"\times"]    & Gr_{\underline v}\arrow[d,hook]\\
\Fl                                              & G\arrow[l,"\pi"',->>]\arrow[r,hook,"\iota"]                                                         & \g\arrow[r,hook,"\tau\circ h"]                                               & Gr(n,2n)
\end{tikzcd}
\end{equation}

In \cite{MR4257099}, algebraic conditions, equivalently a (possibly non-reduced) system of equations,
was developed identifying the conormal variety of a Grassmannian Schubert variety as a closed subvariety of the cotangent bundle of the Grassmannian.
Following \cref{mainDiag},
the conormal variety $N^*\g_w$ of the matrix Schubert variety $\g_w$
is simply the pull-back of the conormal variety $N^*Gr_{\underline v}$
along the map $\tau_{\#}\circ h_{\#}:T^*\g_w\to T^*Gr(n,2n)$,
or equivalently,
$N^*\g_w=N^*Gr_{\underline v}|\g_w$.
Using this characterization of $N^*\g_w$,
along with the results of \cite{MR4257099},
we develop a description of $N^*\g_w$ as a subvariety of $T^*\g$. 
(see \cref{thm:conMat} for a precise statement).

Using the trace form, we make the identifications $\g^*=\g$ and $T^*\g=\g\times\g$.

\begin{theorem}
\label{mainMat}
Suppose $w$ is covexillary.
A point $(x,y)\in\g\times\g= T^*\g$ belongs to the conormal variety $N^*\g_w$ %\subset T^*\g$ 
of $\g_w$
if and only if it certain rank conditions are satisfied on certain sub-matrices of the $2n\times 2n$ matrix
$\begin{pmatrix}
yx&y\\xyx&xy\\
\end{pmatrix}
$.
\end{theorem}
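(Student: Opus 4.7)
The plan is to unpack the right-hand squares of the commutative diagram in \cref{mainDiag} and translate the rank conditions describing $N^*Gr_{\underline v} \subset T^*Gr(n,2n)$ (the main result of \cite{MR4257099}) through the explicit map $\tau_\# \circ h_\#$.

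\textbf{Step 1: Identify $h_\#$ explicitly.} Via the trace pairing we identify $T^*\g = \g \times \g$. Under the standard description of the cotangent space to the Grassmannian, $T^*_V Gr(n,2n)$ consists of endomorphisms $\psi$ of $\IC^{2n}$ satisfying $\mathrm{im}\,\psi \subseteq V \subseteq \ker\psi$. At a point $V = h(x) = \Col{x}$, the natural isomorphisms $V \cong \IC^n$ (first projection) and $\IC^{2n}/V \cong \IC^n$ (sending $(a,b)\mapsto b-xa$) reduce the calculation of $h_\#$ to a direct computation which gives $h_\#(x,y) = (V,\psi)$ where
\[
\psi = \begin{pmatrix} -yx & y \\ -xyx & xy \end{pmatrix}.
\]
The signs of the left-hand blocks are immaterial for the rank conditions that follow.

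\textbf{Step 2: Pull back the conormal condition.} The natural $GL_{2n}$-action on $T^*Gr(n,2n)$ sends $(V,\psi)$ to $(\tau V,\tau\psi\tau^{-1})$. By \cref{mainEmbed}, $\tau\circ h$ induces an open immersion $\g_w \hookrightarrow Gr_{\underline v}$, so $N^*\g_w$ is the scheme-theoretic pull-back of $N^*Gr_{\underline v}$ along $\tau_\# \circ h_\#$. Therefore $(x,y) \in N^*\g_w$ if and only if $(\tau V,\tau\psi\tau^{-1}) \in N^*Gr_{\underline v}$.

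\textbf{Step 3: Invoke the Grassmannian rank criterion.} In \cite{MR4257099}, the conormal variety $N^*Gr_{\underline v} \subset T^*Gr(n,2n)$ is cut out by rank inequalities on prescribed submatrices of the $2n\times 2n$ matrix representing the cotangent vector (supplemented by the Schubert conditions on $V$ itself, which are automatic when $x \in \g_w$). Conjugation by $\tau^{-1}$ permutes rows and columns, so each such inequality on $\tau\psi\tau^{-1}$ corresponds to an inequality on the $\tau$-reindexed submatrix of $\psi$, and hence on a submatrix of $\begin{pmatrix} yx & y \\ xyx & xy \end{pmatrix}$. Reading off the precise list of submatrices and rank bounds from the formulas for $\tau$ and $\underline v$ established in the proof of \cref{mainEmbed} yields the desired criterion, as recorded in \cref{thm:conMat}.

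\textbf{Main obstacle.} The only nontrivial step is Step 3: transporting the list of rank bounds of \cite{MR4257099} through $\tau$ and verifying that the resulting submatrices of $\begin{pmatrix} yx & y \\ xyx & xy \end{pmatrix}$ admit a clean description depending only on $w$, independent of auxiliary choices made in the construction of $\tau$ and $\underline v$. Step 1 is a routine tangent-space computation and Step 2 is formal from \cref{mainEmbed}.
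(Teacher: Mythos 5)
Your proposal follows essentially the same route as the paper's proof of \cref{thm:conMat}: compute the image of $(x,\alpha_y)$ under $\tau_{\#}\circ h_{\#}$ as the pair consisting of $\tau\,Col\begin{pmatrix}I(n)\\x\end{pmatrix}$ and the conjugate of $\begin{pmatrix}-yx&y\\-xyx&xy\end{pmatrix}$ by $\tau$, use the open immersion of \cref{mainEmbed} to pull back $N^*Gr_{\underline v}$, and then transport the rank criterion of \cite{MR4257099} through the permutation $\tau$, which turns each condition into a rank bound on the submatrix $M(i,j)$ of $\begin{pmatrix}yx&y\\xyx&xy\end{pmatrix}$ determined by $\tau^{-1}\e{t_i}$ and $\tau^{-1}\e{t_j}$. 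The only (cosmetic) difference is that you compute $h_{\#}$ directly in the affine chart, whereas the paper passes through the description $T^*Gr(n,2n)=GL_{2n}\times^P\mathfrak u_P$ and the Springer embedding $j$ (\cref{hHash}); the resulting matrix and argument are the same.
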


Consider the pull-back $\pi^*T^*\Fl$ of $T^*\Fl$ along $\pi:G\to\Fl$,
and the induced map $\pi':\pi^*T^*\Fl\to T^*\Fl$.
A point $p\in T^*\Fl$ belongs to the conormal variety $N^*\Fl_w$ if and only if $\pi'^{-1}(x)\in N^*G_w$.
Moreover, the latter is precisely the restriction of $N^*\g_w$ along the inclusion $G_w\to\g_w$. 
This allows us to develop a characterization of $N^*\Fl_w$.

\begin{theorem}
\label{mainSchub}
Suppose $w$ is covexillary,
and let $1\leq p_1\leq\cdots\leq p_m<n$,
$1\leq q_1\leq\cdots\leq q_m< n$
and $r_1,\cdots,r_m$ be integers
such that
\begin{align*}
&&&&\dim(F_{q_i}/E_{p_i})\leq r_i,&&\forall\,1\leq i\leq m&&
\end{align*}
are the minimal conditions describing $\Fl_w$ as a subvariety of $\Fl$ (see \cref{lemEss}).
Consider the cotangent bundle,
\begin{align*}
T^*\Fl=\set{(F_\bullet,z)\in\Fl\times\g}{zF_i\subset F_{i-1}\forall i}.
\end{align*}
Then $(F_\bullet,z)\in N^*\Fl_w$ if and only if $F_\bullet\in\Fl_w$,
and
\begin{align*}
\dim(z(F_{q_i}+E_{p_i})/(F_{q_j}\cap E_{p_j}))\leq 
\begin{cases}
(q_{i-1}-r_{i-1})-(q_j-r_j),\\
(p_i+r_i)-(p_{j+1}+r_{j+1}),\\
\end{cases}
\forall 1\leq j<i\leq n.
\end{align*}
\end{theorem}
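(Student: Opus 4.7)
The plan is to transfer the characterization of $N^*\g_w$ from Theorem \ref{mainMat} down to $N^*\Fl_w$ via the commutative diagram (\ref{mainDiag}). Since $\pi:G\twoheadrightarrow\Fl$ is smooth, the induced map $\pi':N^*G_w\twoheadrightarrow N^*\Fl_w$ is surjective, so a point $(F_\bullet,z)\in T^*\Fl$ lies in $N^*\Fl_w$ if and only if, for any $g$ lifting $F_\bullet$, the corresponding point of $T^*G$ lies in $N^*G_w$. Moreover $G_w$ is open in $\g_w$, so $N^*G_w=N^*\g_w|_{G_w}$, and Theorem \ref{mainMat} applies.

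The first concrete step is to make the cotangent lift $\pi^*:T^*\Fl\hookrightarrow T^*G$ explicit. Under the identifications $T_gG=\g$ (since $G\subset\g$ is open) and $T^*_gG=\g$ (via the trace form), a direct computation of $d\pi_g$ based on $\pi(g+tw)=(1+twg^{-1})F_\bullet$ yields $\pi^*(F_\bullet,z)=(g,g^{-1}z)$. Writing $y=g^{-1}z$, the condition $(F_\bullet,z)\in N^*\Fl_w$ is equivalent to $(g,y)\in N^*\g_w$, which by Theorem \ref{mainMat} translates into rank bounds on submatrices of
\[\begin{pmatrix}yg&y\\gyg&gy\end{pmatrix}=\begin{pmatrix}g^{-1}zg&g^{-1}z\\zg&z\end{pmatrix}=\begin{pmatrix}g^{-1}\\I\end{pmatrix}z\begin{pmatrix}g&I\end{pmatrix}.\]

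The crux is to identify each relevant submatrix with an intrinsic dimension count. For each pair $(i,j)$ with $1\leq j<i$, I would consider the submatrix consisting of rows $\{q_j+1,\ldots,n\}$ from the top block and $\{p_j+1,\ldots,n\}$ from the bottom block, together with columns $\{1,\ldots,q_i\}$ from the left block and $\{1,\ldots,p_i\}$ from the right block. Using the factorization above, this submatrix is the composition $P\circ z\circ J$, where $J$ has image $gE_{q_i}+E_{p_i}=F_{q_i}+E_{p_i}$ and $P$ has kernel $\{v:g^{-1}v\in E_{q_j},\,v\in E_{p_j}\}=F_{q_j}\cap E_{p_j}$. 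Hence its rank equals $\dim\bigl(z(F_{q_i}+E_{p_i})/(F_{q_j}\cap E_{p_j})\bigr)$, matching the left-hand side of the desired inequalities.

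The main obstacle is the combinatorial bookkeeping required to verify that these are exactly the submatrices governing the minimal rank conditions of Theorem \ref{mainMat}, and that their bounds specialize to the two expressions $(q_{i-1}-r_{i-1})-(q_j-r_j)$ and $(p_i+r_i)-(p_{j+1}+r_{j+1})$. This matching rests on two ingredients: the correspondence (via Fulton's description of matrix Schubert varieties) between the essential set of $\g_w$ and the flag data $(p_i,q_i,r_i)$ describing $\Fl_w$, and the rank bounds for $N^*Gr_{\underline v}$ derived in \cite{MR4257099}, transported to $\g_w$ through the embedding of \cref{mainEmbed}. The two alternative bounds reflect the two natural ways of estimating the rank of $P\circ z\circ J$—either by enlarging the row-projection $P$ or by shrinking the column-image $J$—and one checks that each bound is sharp on a different family of submatrices dictated by the covexillary (top-right to bottom-left) shape of the essential set. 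Once this matching is verified, the theorem follows.
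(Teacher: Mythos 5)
Your proposal is correct and takes essentially the same route as the paper: it reduces, via the Cartesian diagram and the surjectivity of $\pi'$, to the matrix Schubert criterion of \cref{thm:conMat} at the point $(g,\alpha_{g^{-1}z})\in T^*\g$, so that the relevant matrix is $\begin{pmatrix}g^{-1}zg&g^{-1}z\\ zg&z\end{pmatrix}$, and then identifies the rank of the submatrix $M(i,j)$ with $\dim\bigl(z(F_{q_i}+E_{p_i})/(F_{q_j}\cap E_{p_j})\bigr)$. Your factorization $M=\begin{pmatrix}g^{-1}\\ I\end{pmatrix}z\begin{pmatrix}g&I\end{pmatrix}$ with the image/kernel bookkeeping is merely a slightly cleaner packaging of the paper's computation via the diagonal embedding $\kappa$ and the automorphism $\widetilde g$ of $E_n\oplus E_n$, and the ``combinatorial bookkeeping'' you defer is exactly what \cref{thm:conMat} already supplies, since $\Fl_w$ and $\g_w$ are cut out by the same essential-set data $(p_i,q_i,r_i)$.
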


\emph{Acknowledgements}: The author would like to thank Leonardo Mihalcea for many fruitful and illuminating conversations.
In particular, the implications of \cref{mainEmbed} for characteristic cycles and Kazhdan-Lusztig polynomials arose out of these conversations.
This work was conducted while the author enjoyed the hospitality and the wonderful environment of ICERM, 
for which he is grateful.

\section{Matrix Schubert Varieties and the Graph Embedding}
\label{secMSV}
Let $\e{n}$ be a $n$-dimensional vector space with standard basis $e_1,\cdots,e_{n}$.
For linear subspaces $V,\,V'\subset \e{n}$,
we will denote by $V/V'$ the image of $V$ under the projection $\e{n}\to \e{n}/V'$.

Let $\g$ be the set of $n\times n$ matrices,
$G=GL_n$ the set of invertible matrices in $\g$,
and $B$ the set of upper triangular matrices in $G$.
The algebra $\g$ (resp. the group $G$) acts on $\e{n}$ by left multiplication with respect to the ordered basis $e_1,\cdots,e_n$.

\subsection{Partial Permutations and the rank matrix}
A partial permutation (of size $n$) is a $\{0,1\}$-matrix (of size $n\times n$)
such that each row and each column contains at most one non-zero entry.
A full-rank partial permutation matrix contains exactly one non-zero entry in each row and column,
and hence corresponds to a permutation of the set $\{1,\cdots,n\}$.
(The permutation $w\in S_n$ corresponds to the $n\times n$ matrix %$\sum_i E_{w(i),i}$,
with $1$s in the positions $\set{(w(i),i)}{1\leq i\leq n}$, and $0$s elsewhere).

Given a partial permutation $w$ of size $n\times n$,
we define a \emph{rank matrix} $\mathbf r^w$ given by
\begin{align*}
\mathbf r^w_{ij}=\dim (w\e j/\e{i-1}).
\end{align*}
Equivalently, $\mathbf r^w_{ij}$ is the rank of the bottom-left sub-matrix of $w$
spanned by the rows $i,i+1,\cdots,n$, and the columns $1,\cdots,j$.

\begin{example}
Consider the permutation $w\in S_6$ given in one-line notation by $w=[351642]$.
The corresponding permutation and rank matrices are given by
\begin{align*}
w=
\begin{pmatrix}
0&0&1&0&0&0\\
0&0&0&0&0&1\\
1&0&0&0&0&0\\
0&0&0&0&1&0\\
0&1&0&0&0&0\\
0&0&0&1&0&0\\
\end{pmatrix}
,&&
\mathbf r^w=
\begin{pmatrix}
1&2&3&4&5&6\\
1&2&2&3&4&5\\
1&2&2&3&4&4\\
0&1&1&2&3&3\\
0&1&1&2&2&2\\
0&0&0&1&1&1\\
\end{pmatrix}
.
\end{align*}
\end{example}

\subsection{Diagram and Essential Set}
\label{covexPP}
Consider a partial permutation $w$ of size $n$.
For each non-zero entry in $w$, we shade every box that appears above it and every box that appears to its right.
The set of unshaded boxes is called the \emph{diagram} $D(w)$ of $w$.
The set of northeast corners of $D(w)$ is called the \emph{essential set} $\ess(w)$ of $w$,
\begin{equation*}
\ess(w)=\set{(p,q)\in D(w)}{(p-1,q),(p,q+1),(p-1,q+1)\notin D(w)}.
\end{equation*}
It is clear that the essential set does not contain any boxes in the top row or the last column.

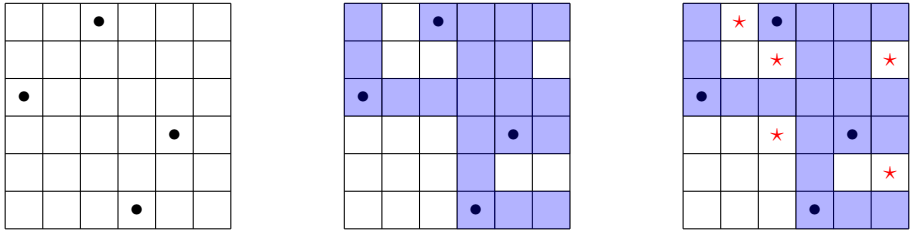
\begin{figure}[ht]
\begin{tikzpicture}[>=latex,line join=bevel,scale=0.5]
\foreach \x in {0,...,6}
{
    \draw (0.5,-0.5-\x)--(6.5,-0.5-\x);
    \draw (\x+0.5,-0.5)--(\x+0.5,-6.5);
}

\draw (1,-3) node {$\bullet$};
%\draw (2,-5) node {$\bullet$};
\draw (3,-1) node {$\bullet$};
\draw (4,-6) node {$\bullet$};
\draw (5,-4) node {$\bullet$};
%\draw (6,-2) node {$\bullet$};
\end{tikzpicture}
\qquad\qquad
\begin{tikzpicture}[>=latex,line join=bevel,scale=0.5]
\foreach \x in {0,...,6}
{
    \draw (0.5,-0.5-\x)--(6.5,-0.5-\x);
    \draw (\x+0.5,-0.5)--(\x+0.5,-6.5);
}

\draw (1,-3) node {$\bullet$};
%\draw (2,-5) node {$\bullet$};
\draw (3,-1) node {$\bullet$};
\draw (4,-6) node {$\bullet$};
\draw (5,-4) node {$\bullet$};
%\draw (6,-2) node {$\bullet$};

\filldraw [fill=blue,opacity=.3] (0.5,-3.5) rectangle (1.5,-0.5);
\filldraw [fill=blue,opacity=.3] (3.5,-6.5) rectangle (4.5,-0.5);
\filldraw [fill=blue,opacity=.3] (4.5,-4.5) rectangle (5.5,-0.5);
\filldraw [fill=blue,opacity=.3] (2.5,-0.5) rectangle (3.5,-1.5);
\filldraw [fill=blue,opacity=.3] (5.5,-0.5) rectangle (6.5,-1.5);
\filldraw [fill=blue,opacity=.3] (1.5,-2.5) rectangle (3.5,-3.5);
\filldraw [fill=blue,opacity=.3] (4.5,-5.5) rectangle (6.5,-6.5);
\filldraw [fill=blue,opacity=.3] (5.5,-2.5) rectangle (6.5,-4.5);
\end{tikzpicture}
\qquad\qquad
\begin{tikzpicture}[>=latex,line join=bevel,scale=0.5]
\foreach \x in {0,...,6}
{
    \draw (0.5,-0.5-\x)--(6.5,-0.5-\x);
    \draw (\x+0.5,-0.5)--(\x+0.5,-6.5);
}
\draw (1,-3) node {$\bullet$};
%\draw (2,-5) node {$\bullet$};
\draw (3,-1) node {$\bullet$};
\draw (4,-6) node {$\bullet$};
\draw (5,-4) node {$\bullet$};
%\draw (6,-2) node {$\bullet$};

\filldraw [fill=blue,opacity=.3] (0.5,-3.5) rectangle (1.5,-0.5);
\filldraw [fill=blue,opacity=.3] (3.5,-6.5) rectangle (4.5,-0.5);
\filldraw [fill=blue,opacity=.3] (4.5,-4.5) rectangle (5.5,-0.5);
\filldraw [fill=blue,opacity=.3] (2.5,-0.5) rectangle (3.5,-1.5);
\filldraw [fill=blue,opacity=.3] (5.5,-0.5) rectangle (6.5,-1.5);
\filldraw [fill=blue,opacity=.3] (1.5,-2.5) rectangle (3.5,-3.5);
\filldraw [fill=blue,opacity=.3] (4.5,-5.5) rectangle (6.5,-6.5);
\filldraw [fill=blue,opacity=.3] (5.5,-2.5) rectangle (6.5,-4.5);

\draw[red] (2,-1) node{$\star$};
\draw[red] (3,-2) node{$\star$};
\draw[red] (3,-4) node{$\star$};
\draw[red] (6,-2) node{$\star$};
\draw[red] (6,-5) node{$\star$};
\end{tikzpicture}
\caption{Computing the diagram and essential set of a partial permutation matrix.}
\end{figure}

Given a partial permutation $w$,
let $\widehat w$ be the unique $2n\times 2n$ permutation matrix whose bottom-left $n\times n$ submatrix is $w$,
and the entries in the top $n$ rows and in the last $n$ columns go from bottom-left to top-right.
Then the essential set of $\widehat w$ is contained in the bottom-left $n\times n$ submatrix,
and agrees with the essential set of $w$.
The significance of essential sets can be gleaned from the following result.

\begin{lemma}
\label{lemEss}
(cf. \cite{fulton:matrixSchubertVarieties})
The partial permutation matrix $w$ is determined by the pair $(\ess(w),\mathbf r^w|\ess(w))$,
i.e., the essential set of $w$, and the restriction of the rank matrix to this essential set.
\end{lemma}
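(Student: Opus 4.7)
The plan is to reconstruct $w$ from $(\ess(w), \mathbf{r}^w|\ess(w))$ by first extending the rank matrix $\mathbf{r}^w$ to all of $\{1,\dots,n\}^2$, and then reading off the $1$-positions of $w$ via the corner-jump identity
\begin{align*}
w_{ij} = \mathbf{r}^w_{ij} - \mathbf{r}^w_{i+1, j} - \mathbf{r}^w_{i, j-1} + \mathbf{r}^w_{i+1, j-1}
\end{align*}
(with boundary conventions $\mathbf{r}^w_{n+1, j} = 0$ and $\mathbf{r}^w_{i, 0} = 0$ for all $i, j$). This identity is immediate from the definition of $\mathbf{r}^w$ by inclusion-exclusion on rows and columns, so the whole argument reduces to the extension step.

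For the extension, the key structural input is the rigid monotonicity of the rank matrix: each of $\mathbf{r}^w_{ij} - \mathbf{r}^w_{i+1, j}$ and $\mathbf{r}^w_{ij} - \mathbf{r}^w_{i, j-1}$ lies in $\{0, 1\}$. I would first show that $\ess(w)$ determines the diagram $D(w)$, using the combinatorial characterization of $D(w)$ as the set of unshaded cells, together with the fact that the shading pattern in any row or column is governed by at most one $1$ of $w$, whose position is pinned down by the NE-corner structure of the essential set. Once $D(w)$ is in hand, the rank values at $\ess(w)$ propagate inside $D(w)$ along staircase paths to the essential cells (each step being a forced jump of $0$ or $1$, determined by which of the two monotonicity rules is active), while outside $D(w)$ the values are forced by the shading rules together with the boundary conditions: a horizontally-shaded cell has vanishing row-difference, a vertically-shaded cell has vanishing column-difference, and a cell containing a $1$ has both differences equal to $1$. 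This propagation determines $\mathbf{r}^w$ uniquely on the full grid.

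The principal obstacle is the first step: verifying that $\ess(w)$ alone determines $D(w)$. While visually clear from examples, a rigorous argument requires showing that the essential corners constrain the positions of all the $1$s of $w$ --- not only those directly bordering essential cells --- and thus the entire shading pattern. The key combinatorial observation is that the $1$ of $w$ producing the shading immediately north or east of an essential corner must lie in a narrow range dictated by the neighbouring essential corners, so that once the configuration of essential cells is fixed, an induction on the number of essential cells reconstructs both the $1$-support and $D(w)$ uniquely. After this is settled, the rank-propagation and the corner-jump identity finish the proof routinely.
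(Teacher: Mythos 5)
The paper gives no argument of its own here (it cites Fulton, and points to Eriksson--Linusson for a reconstruction algorithm), so your proposal stands or falls on its internal logic, and it has a genuine gap: the very first step, the claim that $\ess(w)$ \emph{alone} determines $D(w)$ (and the positions of the $1$s), is false. In the paper's conventions, take $w_1=[4231]$ and $w_2=[2143]$ in $S_4$. Both are covexillary, and both have essential set $\{(3,2)\}$, but $D(w_1)=\{(3,2)\}$ while $D(w_2)=\{(3,1),(3,2),(4,1),(4,2)\}$, and the rank values at the essential box differ ($\mathbf r^{w_1}_{3,2}=1$, $\mathbf r^{w_2}_{3,2}=0$). (This is the transfer of the classical Fulton-convention example $1324$ vs.\ $3412$, which have the same essential cell $(2,2)$ with ranks $1$ and $0$.) So the configuration of essential cells does not pin down the shading pattern, and the induction you sketch ``reconstructing the $1$-support from the corner positions'' cannot exist; the rank values must enter the reconstruction of the diagram from the outset. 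Since your rank-propagation step presupposes knowledge of $D(w)$ (you need to know, for each box, whether it is in the diagram or shaded, and in which direction, to know which local increment is forced), the argument as structured is circular once step one fails.

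Two further, smaller points. First, the parts of your plan that are sound are the corner-jump identity $w_{ij}=\mathbf r^w_{ij}-\mathbf r^w_{i+1,j}-\mathbf r^w_{i,j-1}+\mathbf r^w_{i+1,j-1}$ and the idea that local increments are forced by the shading; but with this paper's southwest ranks the forced values are the opposite of what you wrote: a box in $D(w)$ has \emph{both} differences equal to $0$, a box shaded because the $1$ in its row lies to its left has row-difference $1$, and a box shaded because the $1$ in its column lies below has column-difference $1$. Second, a correct proof along these general lines must argue that the rank \emph{inequalities} at non-essential boxes are implied by those at essential boxes (equivalently, that two partial permutations with the same pair $(\ess(w),\mathbf r^w|\ess(w))$ have the same full rank matrix), which is Fulton's actual route, or else run the Eriksson--Linusson reconstruction algorithm, which consumes the rank values together with the positions at every step.
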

An algorithm to recover $w$ from the pair $(\ess(w),\mathbf r^w|\ess(w))$ was developed by Eriksson and Linusson in \cite{MR1412437}.

\begin{defn}
We say that a partial permutation $w$ is \emph{covexillary} if its essential set goes from the top-left to the bottom-right,
i.e., there exist integers $1\leq p_1\leq\cdots\leq p_{m-1}< n$ and $1\leq q_1\leq\cdots\leq q_{m-1}< n$
such that
\begin{align*}
\ess(w)=\{(p_1+1,q_1),\cdots, (p_{m-1}+1,q_{m-1})\}.
\end{align*}
\end{defn}

\begin{lemma}
(cf. \cite{fulton:matrixSchubertVarieties})
A permutation $w\in S_n$ is covexillary if and only if it avoids the pattern $3412$,
i.e., there does not exist any quadruple $1\leq i<j<k<l\leq n$ satisfying $w(k)<w(l)<w(i)<w(j)$.
\end{lemma}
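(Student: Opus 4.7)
The strategy is to argue combinatorially about the diagram $D(w)$. For a full permutation $w\in S_n$, the shading rule gives $(i,j)\in D(w)$ iff $w(j)<i$ and $w^{-1}(i)>j$. Unpacking the northeast-corner conditions yields the explicit criterion $(a,b)\in\ess(w)$ iff $w(b)<a\le w(b+1)$ and $w^{-1}(a-1)\le b<w^{-1}(a)$. In particular, each essential box is pinned by two guard dots of $w$: one of value $a-1$ at some column $\le b$, and one of value $\ge a$ at column $b+1$ or later (with value exactly $a$ precisely when this column equals $w^{-1}(a)$).

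For ``$w$ contains $3412$ implies $w$ is not covexillary,'' I would start from a pattern $i<j<k<l$ with $w(k)<w(l)<w(i)<w(j)$ and exhibit two incomparable essential boxes. The pair of dots $(w(i),i),(w(j),j)$ and the pair $(w(k),k),(w(l),l)$ each impose local rank conditions on $w$ witnessed by essential boxes $(\alpha,\beta),(\gamma,\delta)$ of $D(w)$ whose row and column ranges are controlled by the values $w(k)<w(l)<w(i)<w(j)$ and by the columns $i<j<k<l$ respectively. By locating the NE corners precisely, walking northeast within $D(w)$ from initial diagram points constructed from each pair, one verifies $\alpha<\gamma$ and $\beta>\delta$, violating covexillarity.

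Conversely, given $(a,b),(c,d)\in\ess(w)$ with $a<c$ and $b>d$, the guard-dot criterion produces four dots of $w$ at values $a-1,a,c-1,c$ in columns contained in $[1,b],[b+1,n],[1,d],[d+1,n]$ respectively. A short argument excludes $c=a+1$: in that case the value-$a$ dot and the value-$(c-1)$ dot coincide, forcing their column both to lie in $[b+1,n]$ and in $[1,d]$, contradicting $b>d$. With $c\ge a+2$ the four rows are distinct, and in the generic sub-case the columns sort as $w^{-1}(c-1)<w^{-1}(c)<w^{-1}(a-1)<w^{-1}(a)$, directly realising $3412$. When the columns interleave (for instance when $w^{-1}(c)>w^{-1}(a)$), the boundary dot in column $b+1$ or $d+1$, whose value is forced by $w(b+1)\ge a$ or $w(d+1)\ge c$ respectively, can be substituted for the corresponding guard dot to produce the desired pattern. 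The main obstacle is this case analysis; a cleaner alternative is to invoke Fulton's theorem that $\ess(w)$ indexes a minimal system of rank conditions cutting out $\g_w\subset\g$, which reduces the equivalence to a direct comparison between chain structures of rank conditions and $3412$-avoidance in the one-line notation of $w$.
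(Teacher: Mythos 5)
The paper itself gives no argument for this lemma (it is quoted from Fulton's paper on matrix Schubert varieties), so your attempt has to stand on its own, and as written it does not yet. Your explicit criterion $(a,b)\in\ess(w)\iff w(b)<a\le w(b+1)$ and $w^{-1}(a-1)\le b<w^{-1}(a)$ is correct for the paper's diagram convention and is a good foundation. The genuine gap is in the direction ``$w$ contains $3412$ implies $w$ is not covexillary.'' Walking northeast through $D(w)$ from the two diagram boxes $(w(j),i)$ and $(w(l),k)$ only gives one-sided bounds: an essential box with row $\le w(j)$, column $\ge i$, and one with row $\le w(l)$, column $\ge k$. These bounds alone do not make the two boxes incomparable --- a priori both walks could terminate weakly northeast of $(w(l),k)$, even at the same essential box --- and you supply no argument for the claimed inequalities (which are moreover oriented backwards: the box produced from the pair $(w(i),i),(w(j),j)$ should have the \emph{larger} row and \emph{smaller} column). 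What is missing is a trapping statement: the walk started at $(w(j),i)$ can never reach row $w(i)$, since every diagram box in row $w(i)$ has column $<w^{-1}(w(i))=i$ while the walk's columns stay $\ge i$, and it can never reach column $j$, since every diagram box in column $j$ has row $>w(j)$ while the walk's rows stay $\le w(j)$. Hence that walk ends at an essential box with row $>w(i)>w(l)$ and column $<j<k$, which is strictly incomparable with any essential box weakly northeast of $(w(l),k)$. With this lemma your argument closes; without it, ``one verifies $\alpha<\gamma$ and $\beta>\delta$'' is an assertion, not a proof.

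In the converse direction your exclusion of $c=a+1$ is correct, but the remaining case analysis is only sketched and you explicitly leave it open; it can in fact be avoided entirely. From $w(b)<a\le c\le w(d+1)$ you get $b\ne d+1$, hence $b\ge d+2$, and then the positions $w^{-1}(c-1)\le d<d+1<b<w^{-1}(a)$ carry the values $c-1$, $w(d+1)\ge c$, $w(b)\le a-1$, $a$, which form a $3412$ pattern because $a<c-1$; no interleaving subcases arise. Finally, the proposed fallback --- invoking Fulton's minimality of the essential rank conditions --- does not by itself ``reduce the equivalence to a direct comparison'' and is not a substitute for the two arguments above. So the skeleton of your proof is viable and close to complete, but the trapping lemma and the finished (or replaced) case analysis are needed before it counts as a proof of the stated equivalence.
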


\subsection{Schubert varieties in the Flag Manifold}
Let $E_i=\left\langle e_1,\cdots,e_n\right\rangle$,
let $E_\bullet=(E_1\subset\cdots\subset E_{n-1})$ denote the standard flag in $E_n$,
and let \Fl\ denote the variety of flags in $\e{n}$, i.e.,
\begin{align*}
\Fl=\set{(F_1\subset\cdots\subset F_{n-1})}{\dim F_i=i}.
\end{align*}
For $w\in S_n$,
the $B$-orbit $\Fl_w^\circ$ of the permutation flag $wE_\bullet$ in $\Fl$ is called a Schubert cell,
and its closure $\Fl_w$ a Schubert variety.
We have
\begin{equation}
\label{SchubertConditions}
\begin{split}
\Fl^\circ_w & =\set{F_\bullet\in\Fl}{\dim(F_{j}/\e{i-1})=\mathbf r_w(i,j)\ \forall\ 1\leq i,j\leq n},\\
\Fl_w       & =\set{F_\bullet\in\Fl}{\dim(F_{j}/\e{i-1})\leq\mathbf r_w(i,j)\ \forall\ 1\leq i,j\leq n}.
\end{split}
\end{equation}
Given $u,w\in S_n$, we have $\Fl_u\subset\Fl_w$ if and only if
\begin{align*}
&&\mathbf r_u(i,j)\leq\mathbf r_w(i,j)&&\forall\,1\leq i,j\leq n.
\end{align*}
The partial order $u\leq w\iff\Fl_u\subset\Fl_w$ is called the Bruhat order on $S_n$.

\begin{rem}
A permutation which avoids the pattern $2143$ is called vexillary.
Let $w_0$ denote the longest permutation in $S_n$.
Then $w\in S_n$ is covexillary if and only if $w_0w$ is vexillary.
There are two common conventions for indexing Schubert varieties by permutations:
one corresponds to $\dim (\Fl^w)=\ell(w_0)-\ell(w)$,
the other to $\dim(\Fl_w)=\ell(w)$,
where $\ell(w)$ is the length of $w$.
The Schubert varieties indexed by vexillary permutations in the former convention
correspond to the ones indexed by covexillary permutations in the latter.
\end{rem}

\subsection{Grassmannian Schubert varieties}
Let $Gr(d,n)$ denote the Grassmannian variety of $d$-dimensional subspaces of $\e n$.
The $B$-orbit closures $Gr_{\underline u}\subset Gr(d,n)$ (called Grassmannian Schubert varieties)
are indexed by increasing $d$-sequences $\underline u$ taking values in $\{1,\cdots,n\}$.
Given such a sequence $\underline u=(u_1,\cdots,u_d)$,
we have
\begin{align}
\label{defGrassSchub}
Gr_{\underline u}=\set{V\subset \e n}{\dim(V+\e{u_i})\leq d+u_i-i\ \forall\,1\leq i\leq d}.
\end{align}
We have $Gr_{\underline u}\subset Gr_{\underline v}$ if and only if $u_i\leq v_i$ for all $1\leq i\leq d$.

Note the redundancy inherent in the conditions in \cref{defGrassSchub}.
Given a sequence $\underline u$ as above,
let $S_{\underline u}=\set{i\in[d]}{u_{i+1}\neq u_i+1}$.
Then
\begin{align*}
Gr_{\underline u}=\set{V\subset \e n}{\dim(V+\e{u_i})\leq d+u_i-i\ \forall\,i\in S_{\underline u}}.
\end{align*}

\subsection{Matrix Schubert varieties}
Consider the $B\times B$-action on $\g$ given by
\begin{align*}
(b_l,b_r)\cdot x =b_l x b_r^{-1}.
\end{align*}
The orbits $\g^\circ_w\subset \g$ of this action are indexed by $n\times n$ partial permutations.
Here $\g^\circ_w$ denotes the $B\times B$-orbit in \g\ of the partial permutation matrix $w$.
We call $\g^\circ_w$ a \emph{matrix Schubert cell},
and its closure $\g_w:=\overline{\g_w^\circ}$ a \emph{matrix Schubert variety}.
We have
\begin{equation}
\label{msvConditions}
\begin{split}
\g^\circ_w & =\set{x\in \g}{\dim(x\e j/\e {i-1})=\mathbf r_w(i,j)\ \forall\ 1\leq i,j\leq n},\\
\g_w       & =\set{x\in \g}{\dim(x\e j/\e {i-1})\leq\mathbf r_w(i,j)\ \forall\ 1\leq i,j\leq n}.
\end{split}
\end{equation}
Given partial permutations $u,w$, we have $\g_u\subset\g_w$ if and only if
\begin{align*}
&&\mathbf r_u(i,j)\leq\mathbf r_w(i,j)&&\forall i,j.
\end{align*}
The partial order $u\leq w\iff \g_u\subset\g_w$ (also called the Bruhat order) extends the Bruhat order on permutations. 

The set $G\subset \g$ is a $B\times B$-stable open subvariety of $\g$.
We have $\g^\circ_w\subset G$ if and only if $w$ is a permutation matrix,
i.e., if and only if $\rk(w)=n$.
In this case,
we set
\begin{align*}
G_w=\g_w\bigcap G=\bigcup\limits_{\substack{u\in S_n\\u\leq w}}\g^\circ_u.
\end{align*}

Consider the map $\pi:G\twoheadrightarrow\Fl$, given by $g\mapsto gE_\bullet$.
Comparing \cref{SchubertConditions,msvConditions}, we see that $G_w=\pi^{-1}(\Fl_w)$.

\subsection{Open immersion for covexillary matrix Schubert varieties}
Let $w$ be a covexillary partial permutation,
so that
\begin{align*}
\ess(w)=\{(p_1+1,q_1),\cdots,(p_{m-1}+1,q_{m-1})\}
\end{align*}
for some
$0\leq p_1\leq\cdots\leq p_{m-1}\leq n-1$ and $1\leq q_1\leq\cdots\leq q_{m-1}\leq n$.
Let $\mathbf r^w$ be the rank matrix of $w$,
and set $r_i=\mathbf r^w_{p_i+1,q_i}$.
For convenience, we set $p_m=q_m=n$, and $t_i=p_i+q_i$.
Let $\tau$ be the $2n\times 2n$ permutation matrix given by
\begin{align}
\label{tauMatrix}
\tau=
\begin{pmatrix}
I(q_1)&0&0&0&0&0\\
0&0&0&I(p_1)&0&0\\
0&\ddots&0&0&0&0\\
0&0&0&0&\ddots&0\\
0&0&I(q_m-q_{m-1})&0&0&0\\
0&0&0&0&0&I(p_m-p_{m-1})\\
\end{pmatrix},
\end{align}
where $I(k)$ denotes the identity matrix of size $k$, and the $0$s are rectangular zero matrices of appropriate sizes.
By an abuse of notation, we will also denote by $\tau:Gr(n,2n)\to Gr(n,2n)$ the automorphism of $Gr(n,2n)$
induced by the left multiplication action of $\tau$ on \e n. %by the matrix $\tau$.

Let $Col(\ )$ denote the column span of a matrix,
and consider the map %let $f:G\to Gr(n,2n)$ denote the map given by
\begin{align}
\label{map:h}
h:\g\to Gr(n,2n),&&
h(x)=Col
\begin{pmatrix}
I_n\\
x\\
\end{pmatrix}.
\end{align}
If we view $x\in\g$ as a map $x:E_n\to E_n$,
then $h(x)$ can be identified as the graph of this map.
Hence we call $h:\g\to Gr(n,2n)$ the graph embedding.

\begin{theorem}
\label{prop:MSVtoGr}
Let $w\in S_n$ and $\tau\in S_{2n}$ be as in \cref{tauMatrix}.
Let $Gr_{\underline v}$ be the Schubert subvariety of $Gr(n,2n)$ given by
\begin{align*}
Gr_{\underline v}=\set{V\in Gr(n,2n)}{\dim(V+\e{t_i})\leq n+p_i+r_i}.
\end{align*}
The composite map $\tau\circ h:\g\to Gr(n,2n)$ induces an open immersion $\g_w\hookrightarrow Gr_{\underline v}$.
\end{theorem}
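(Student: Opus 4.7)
My plan is to split the claim into two independent pieces: (i) $\tau\circ h\colon\g\to Gr(n,2n)$ is an open immersion, and (ii) the Schubert conditions cutting out $Gr_{\underline v}$ pull back, along $\tau\circ h$, to precisely the essential-set conditions cutting out $\g_w$. Writing $U:=(\tau\circ h)(\g)$, (i) and (ii) together will give $(\tau\circ h)(\g_w)=U\cap Gr_{\underline v}$, which is open in $Gr_{\underline v}$ since $U$ is open in $Gr(n,2n)$, and on which $\tau\circ h$ is an isomorphism onto its image, as required.

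Piece (i) is immediate: $h$ is the standard graph parametrization of the affine chart $\set{V\in Gr(n,2n)}{V\cap\langle e_{n+1},\ldots,e_{2n}\rangle=0}$, with inverse extracted from the graph trivialization, and left multiplication by the permutation matrix $\tau$ is an automorphism of $Gr(n,2n)$, so the composite is again an open immersion. The heart of the argument is (ii). First I would parse the block form of \cref{tauMatrix} to identify $\tau^{-1}E_{t_i}$; the identity blocks $I(q_{i'}-q_{i'-1})$ and $I(p_{i'}-p_{i'-1})$ show that the indices $k\in[1,2n]$ with $\tau(k)\le t_i$ are exactly $[1,q_i]\cup[n+1,n+p_i]$, whence $\tau^{-1}E_{t_i}=E_{q_i}+\langle e_{n+1},\ldots,e_{n+p_i}\rangle$. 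Next, because $h(x)$ is the graph of $x$ inside the decomposition $E_{2n}=E_n\oplus\langle e_{n+1},\ldots,e_{2n}\rangle$, a direct dimension count (computing $h(x)\cap\tau^{-1}E_{t_i}$ by first- and second-copy projections) yields the key identity
\begin{align*}
\dim\bigl(\tau h(x)+E_{t_i}\bigr)=\dim\bigl(h(x)+\tau^{-1}E_{t_i}\bigr)=n+p_i+\dim(xE_{q_i}/E_{p_i}).
\end{align*}
Thus the Schubert inequality $\dim(V+E_{t_i})\le n+p_i+r_i$ defining $Gr_{\underline v}$ pulls back to the rank condition $\dim(xE_{q_i}/E_{p_i})\le r_i=\mathbf r^w_{p_i+1,q_i}$, i.e., the essential-set inequality of $w$ at $(p_i+1,q_i)$.

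To finish, I would invoke \cref{lemEss}: these essential-set inequalities suffice to describe $\g_w$ as a subvariety of $\g$, so $(\tau\circ h)^{-1}(Gr_{\underline v})=\g_w$ and (ii) follows. The main obstacle I anticipate is the combinatorial bookkeeping needed to pin down $\tau^{-1}E_{t_i}$ correctly; once the $p_i$ and $q_i$ shifts are properly aligned, the key dimension identity and the remainder of the argument reduce to routine linear algebra.
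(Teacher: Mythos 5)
Your proposal is correct and follows essentially the same route as the paper: show $\tau\circ h$ is an open immersion, compute $\tau^{-1}E_{t_i}=\langle e_1,\ldots,e_{q_i},e_{n+1},\ldots,e_{n+p_i}\rangle$, and establish the key identity $\dim(h(x)+\tau^{-1}E_{t_i})=n+p_i+\dim(xE_{q_i}/E_{p_i})$ (the paper's \cref{lem:1rankCond}, proved there by a block-matrix rank computation rather than your intersection count), then conclude via the sufficiency of the essential-set conditions. The only difference is cosmetic: you make the appeal to the essential-set description of $\g_w$ explicit, whereas the paper folds it into "comparing with \cref{msvConditions}."
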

The proof of \cref{prop:MSVtoGr} depends on the following lemma.

\begin{lemma}
\label{lem:1rankCond}
Let $V=\left\langle e_1,\cdots e_q, e_{n+1},\cdots,e_{n+p}\right\rangle$.
For $x\in\g$,
we have 
\begin{align*}
\dim(x\e q/\e p)\leq r\iff\dim(h(x)+V)\leq n+p+r.
\end{align*}
\end{lemma}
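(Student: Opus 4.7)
The proof is a one-line dimension count once the intersection $h(x)\cap V$ is identified correctly, so the plan is to reduce both sides of the claimed equivalence to the same numerical quantity.

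First I would write $h(x)$ concretely as the graph
\[
h(x)=\set{(v,xv)\in E_n\oplus E_n}{v\in E_n},
\]
so that $\dim h(x)=n$, and observe that $V=E_q\oplus E_p$ under the decomposition $E_{2n}=E_n\oplus E_n$ with $E_p$ sitting in the second copy. Then the intersection $h(x)\cap V$ consists of those pairs $(v,xv)$ with $v\in E_q$ and $xv\in E_p$. Setting
\[
W=\set{v\in E_q}{xv\in E_p},
\]
the projection onto the first factor identifies $h(x)\cap V$ with $W$, hence $\dim(h(x)\cap V)=\dim W$.

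Next I would compute $\dim W$ using the linear map $\varphi:E_q\to E_n/E_p$ given by $v\mapsto xv+E_p$, whose kernel is exactly $W$ and whose image is $(xE_q+E_p)/E_p$, which by the notational convention of the section is precisely $xE_q/E_p$. The rank-nullity formula therefore gives
\[
\dim W=q-\dim(xE_q/E_p).
\]

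Finally I would plug this into the standard formula $\dim(h(x)+V)=\dim h(x)+\dim V-\dim(h(x)\cap V)$, obtaining
\[
\dim(h(x)+V)=n+(p+q)-\bigl(q-\dim(xE_q/E_p)\bigr)=n+p+\dim(xE_q/E_p).
\]
The equivalence in the lemma then follows by subtracting $n+p$ from both sides. There is no real obstacle here; the only thing to be careful about is respecting the convention that $xE_q/E_p$ means the image of $xE_q$ in $E_n/E_p$ rather than a literal quotient (since $E_p\not\subset xE_q$ in general), which is why the rank-nullity step produces exactly the quantity appearing in the rank matrix conditions of \cref{msvConditions}.
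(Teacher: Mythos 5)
Your proof is correct, and it arrives at the same key identity as the paper, namely $\dim(h(x)+V)=n+p+\dim(xE_q/E_p)$, but by a different route. The paper argues in coordinates: it writes $x$ in block form with bottom-left $(n-p)\times q$ block $c$, notes $\dim(xE_q/E_p)=\rk(c)$, and computes $\dim(h(x)+V)$ as the rank of the matrix whose columns are the standard spanning sets of $h(x)$ and $V$, reducing by column operations to get $n+p+\rk(c)$. You instead stay coordinate-free: you exploit the graph structure of $h(x)$ to identify $h(x)\cap V$ with $W=\set{v\in E_q}{xv\in E_p}$, compute $\dim W=q-\dim(xE_q/E_p)$ by rank--nullity applied to $v\mapsto xv \pmod{E_p}$, and finish with the formula $\dim(h(x)+V)=\dim h(x)+\dim V-\dim(h(x)\cap V)$. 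Your handling of the convention that $xE_q/E_p$ denotes the image of $xE_q$ in $E_n/E_p$ is exactly right and is the one point where a careless reading could go wrong. The trade-off is minor: your argument avoids the block-matrix bookkeeping entirely (the paper's displayed matrix in fact has a harmless typo in one identity block size, which your approach never has to confront), whereas the paper's explicit matrix computation is closer in style to the rank conditions on submatrices that reappear later in \cref{thm:conMat}. Either argument is a complete proof of \cref{lem:1rankCond}.
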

\begin{proof}
Let us write $x$ in block-matrix form,
\begin{align*}
x=\begin{pmatrix}
a & b\\
c & d\\
\end{pmatrix},
\end{align*}
with $a$ a $p\times q$ matrix,
$b$ a $p\times (n-q)$ matrix,
$c$ a $(n-p)\times q$ matrix,
and $d$ a $(n-p)\times (n-q)$ matrix.
The result follows from the observations
$\dim(x\e q/\e p)=\rk(c)$,
and 
\begin{align*}
\dim (h(x)+V)
&=\rk
\begin{pmatrix}
I(q)& 0   & I(q)& 0\\
0   & I(p)& 0   & 0\\
a   & b   & 0   & I(p)\\
c   & d   & 0   & 0\\
\end{pmatrix}\\
&=\rk
\begin{pmatrix}
0 & 0   & I(q)& 0\\
0 & I(p)& 0   & 0\\
0 & 0   & 0   & I(p)\\
c & 0   & 0   & 0\\
\end{pmatrix}
=n+p+\rk(c).
\end{align*}
\end{proof}

\begin{proof}[Proof of \cref{prop:MSVtoGr}]
Since $h:\g\hookrightarrow Gr(n,2n)$ is an open immersion,
and $V\mapsto \tau V$ is an automorphism of $Gr(n,2n)$,
the composite map $\tau\circ h:\g\to Gr(n,2n)$ is an open immersion. 
Hence, it suffices to show that for $x\in\g$,
we have $\tau h(x)\in Gr_{\underline v}$ if and only if $x\in\g_w$.

Observe that $\tau h(x)\in Gr_{\underline v}$ if and only if
\begin{align*}
      & \dim(\tau h(x)+\e{t_i})\leq n+p_i+r_i     && \forall\,1\leq i\leq m-1\\
\iff  & \dim(h(x)+\tau^{-1}\e{t_i})\leq n+p_i+r_i && \forall\,1\leq i\leq m-1.
\end{align*}
Following \cref{tauMatrix}, we see that
\begin{align}
\label{tauInverseE}
\tau^{-1}\e{t_i}=\left\langle e_1,\cdots,e_{q_i},e_{n+1},\cdots,e_{n+p_i}\right\rangle.
\end{align}
Following \cref{lem:1rankCond}, we have
\begin{align*}
    &\dim(h(x)+\tau^{-1}\e{t_i})\leq n+p_i+r_i && \forall 1\leq i\leq m-1\\
\iff &\dim(x\e{q_i}/\e{p_i})\leq r_i && \forall 1\leq i\leq m-1.
\end{align*}
Comparing with \cref{msvConditions},
we see that 
$\tau h(x)\in Gr_{\underline v}$ if and only if $x\in\g_w$.
\end{proof}

\begin{cor}
Consider the open immersion $\iota:G\to\g$.
If $w\in S_n$ is covexillary permutation,
we have an open immersion $\tau\circ h\circ\iota: G_w\to Gr_{\underline v}$.
\end{cor}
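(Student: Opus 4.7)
The plan is to observe that this corollary is essentially immediate from \cref{prop:MSVtoGr} together with the compatibility of open immersions with composition. The single content point worth making explicit is that the restriction of an open immersion to an open subvariety is again an open immersion of the appropriate loci.

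First I would recall from the text preceding the statement that $G\subset\g$ is a $B\times B$-stable open subvariety, and that by definition $G_w=\g_w\cap G$. Since $\iota:G\hookrightarrow\g$ is an open immersion, the pullback to $\g_w$ yields an open immersion $\iota|_{G_w}:G_w\hookrightarrow\g_w$, identifying $G_w$ with an open subvariety of $\g_w$.

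Next I would invoke \cref{prop:MSVtoGr}, which gives an open immersion $\tau\circ h:\g_w\hookrightarrow Gr_{\underline v}$. Composing two open immersions is again an open immersion, so
\begin{align*}
\tau\circ h\circ\iota:G_w\hookrightarrow Gr_{\underline v}
\end{align*}
is an open immersion, as required.

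The main (and only) potential obstacle is purely bookkeeping, namely that one is really taking the composition of $\iota|_{G_w}:G_w\to\g_w$ (rather than the ambient $\iota:G\to\g$) with the map from \cref{prop:MSVtoGr}; but this is immediate from $G_w=G\cap\g_w$. No further calculation is required.
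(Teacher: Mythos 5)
Your argument is correct and is exactly the reasoning the paper intends (the corollary is stated without proof as an immediate consequence of \cref{prop:MSVtoGr}): since $G_w=\g_w\cap G$ is open in $\g_w$, composing the inclusion $G_w\hookrightarrow\g_w$ with the open immersion $\tau\circ h:\g_w\hookrightarrow Gr_{\underline v}$ gives the desired open immersion. No issues.
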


\subsection{Equivariance of the Embedding}
The $B\times B$-action on \g\ restricts to a $T\times T$-action on \g.
Let $T_{2n}$ be the group of invertible $2n\times 2n$ diagonal matrices,
acting on $Gr(n,2n)$ via left-multiplication.
Let ${}^\tau\_:T_{2n}\to T_{2n}$ denote conjugation by the permutation $\tau$.
Making the identification,
\begin{align*}
T\times T\overset\sim\to T_{2n},&& (t_l,t_r)\mapsto \begin{pmatrix}t_r&0\\0&t_l\end{pmatrix},
\end{align*}
we have a commutative diagram of actions,
\begin{center}
\begin{tikzcd}
T\times T\arrow[r,"\sim"]                               & T_{2n}\arrow[r,"{}^\tau\_"]                     & T_{2n}\\
\g\arrow[r,hookrightarrow,"h"]\arrow[loop,looseness=5] & Gr(n,2n)\arrow[loop,looseness=3]\arrow[r,"\tau"] & Gr(n,2n)\arrow[loop,looseness=3].
\end{tikzcd}
\end{center}

\begin{rem}
Let $x_i$ (resp. $y_i$) denote the $i^{th}$ coordinate function on the second (resp. first) component of $T\times T$,
and let $t_i$ denote the $i^{th}$ coordinate function on $T_{2n}$.
The map $f:Hom(T_{2n},\IC^*)\to Hom(T\times T,\IC^*)$,
dual to the composite map
\begin{center}
\begin{tikzcd}
T\times T\arrow[r,"\sim"]& T_{2n}\arrow[r,"{}^\tau\_"]& T_{2n}.
\end{tikzcd}
\end{center}
is given by $t_{\tau(i)}\mapsto \begin{cases} y_i&\text{if }i\leq n\\ x_{i-n}&\text{if }i>n\end{cases}$.

Consider the double Schubert polynomial $\mathfrak S_{w_0w}$. %(x_1,\cdots,x_n,y_1,\cdots,y_n)$.
Following \cite{MR2180402}, $\mathfrak S_{w_0w}$ is precisely the $T\times T$ multidegree of $\g_w$ in \g.
Moreover, since $\tau\circ h:\g\to Gr(n,2n)$ is an open immersion taking
$\g_w$ to $Gr_{\underline v}$ and $0\in\g$ to $\tau\in Gr(n,2n)$,
this multidegree equals the image under $f$ of $[Gr_{\underline v}]_{T_{2n}}|\tau$,
the localization at $\tau$ of the $T_{2n}$-equivariant cohomology class of $Gr_{\underline v}$.
\end{rem}

\subsection{Characteristic Cycles and Euler Obstructions}
The local Euler obstruction is a certain topological invariant of algebraic varieties.
It was first introduced by MacPherson in \cite{macpherson:chern},
and has seen many alternate definitions and interpretations,
see \cite[Ch. 8]{MR2574165} for a discussion.
Combinatorial models computing the local Euler obstructions of Grassmannian Schubert varieties were developed by \cite{boe.fu},
see also \cite{MR1084458,brylinski.kashiwara:KL}.

A long-standing problem in geometric representation theory is determining when the characteristic cycle of the IC sheaf of a Schubert variety $\Fl_w$ is irreducible,
and more generally, computing the characteristic cycle as a linear combination of conormal cycles.
Euler obstruction computations are useful in the study of the irreducibility of the characteristic cycle of the IC sheaf of a Schubert variety.
Precisely, the characteristic cycle is irreducible if and only if 
the Euler obstruction of $\Fl_w$ at a point $v$ equals 
the value of the Kazhdan-Lusztig polynomial \cite{KL:topological,MR573434} evaluated at $1$.
Bressler, Finkelberg, and Lunts \cite{MR1084458} used Zelevinsky's small resolutions to show that the characteristic cycle of a Grassmannian Schubert variety is irreducible.
Later, Boe and Fu \cite{boe.fu} gave a different proof,
showing that for Grassmannian Schubert varieties,
Lascoux and Sch\"utzenberger's combinatorial model \cite{MR1354702} for Kazhdan-Lusztig polynomials also computes the Euler obstructions.

Since both the Kazhdan-Lusztig polynomials and the Euler obstructions are local invariants,
the following is an immediate consequence of \cref{prop:MSVtoGr}.

\begin{thm}
\label{mainChar}
The characteristic cycle of the IC sheaf a covexillary Schubert variety is irreducible.
\end{thm}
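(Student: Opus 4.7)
The plan is to reduce the assertion to the known irreducibility of characteristic cycles of Grassmannian Schubert varieties, established by Bressler--Finkelberg--Lunts \cite{MR1084458} (with an alternative proof by Boe--Fu \cite{boe.fu}). The bridge is the open embedding of \cref{prop:MSVtoGr}, together with the fact that both the local Euler obstruction $\mathrm{Eu}_{\Fl_w}(vE_\bullet)$ and the Kazhdan--Lusztig value $P_{v,w}(1)$ are intrinsic local invariants of the analytic germ, unaffected by restriction to an open subset or by multiplication with a smooth factor.

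First I would recall the criterion already stated in the introduction: the characteristic cycle of the IC sheaf of $\Fl_w$ is irreducible if and only if $\mathrm{Eu}_{\Fl_w}(vE_\bullet) = P_{v,w}(1)$ at every $T$-fixed point $vE_\bullet$ of $\Fl_w$. Then I would chase the chain of morphisms
\[
\Fl_w \xleftarrow{\pi} G_w \xrightarrow{\iota} \g_w \xrightarrow{\tau\circ h} Gr_{\underline v}
\]
from \cref{mainDiag}, in which $\pi$ is a smooth surjection with fiber $B$, $\iota$ is an open immersion, and $\tau\circ h$ is an open immersion by \cref{prop:MSVtoGr}. For any $g \in G_w$ lifting a point $vE_\bullet \in \Fl_w$, the analytic germ of $\Fl_w$ at $vE_\bullet$ agrees, modulo a smooth factor, with the germ of $Gr_{\underline v}$ at $\tau h(g)$. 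Hence both the Euler obstruction and the Kazhdan--Lusztig value at $vE_\bullet$ coincide with the corresponding invariants of $Gr_{\underline v}$ at $\tau h(g)$.

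Since Bressler--Finkelberg--Lunts established the irreducibility criterion at every point of every Grassmannian Schubert variety, the equality propagates back along the chain to every $T$-fixed point of $\Fl_w$, completing the argument. The step requiring most care is checking the compatibility of the $T$-fixed locus under the embedding: each fixed point of $\Fl_w$ must have a lift in $G_w$ whose image $\tau h(g) \in Gr_{\underline v}$ carries the matching Kazhdan--Lusztig indexing on the Grassmannian side. This is in essence the content of Lascoux's theorem \cite{MR1354702}, which is itself recovered in this paper via \cref{mainEmbed}, so no independent combinatorial work should be required.
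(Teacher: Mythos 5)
Your proposal is correct and is essentially the paper's own argument: the paper deduces \cref{mainChar} immediately from \cref{prop:MSVtoGr} via the chain $\Fl_w \xleftarrow{\pi} G_w \hookrightarrow \g_w \hookrightarrow Gr_{\underline v}$, using that Euler obstructions and Kazhdan--Lusztig values at $1$ are local invariants (unchanged by open restriction and smooth factors) and that the equality $\mathrm{Eu}=P(1)$ holds for Grassmannian Schubert varieties by Bressler--Finkelberg--Lunts/Boe--Fu. Your closing worry about matching the Kazhdan--Lusztig indexing is not actually needed, since $P(1)$ is the Euler characteristic of the local IC stalk and hence, like $\mathrm{Eu}$, is intrinsic to the germ being transported.
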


\section{The Conormal Variety of a covexillary Matrix Schubert Variety}
In this section, we give a (possibly non-reduced) system of equations identifying the conormal variety $N^*\g_w$ of $\g_w$ in \g\ as a subvariety of the cotangent bundle $T^*\g$.
To this end, we recall standard descriptions of the cotangent bundles $T^*\g$ and $T^*Gr(n,2n)$,
and give a formula in \cref{hHash} for the induced map on cotangent bundles,
$\tau_{\#}\circ h_{\#}:T^*\g\to T^*Gr(n,2n)$, compatible with these descriptions.

\subsection{Conormal Varieties}
Let $M$ be a smooth variety, and $X\subset M$ a closed subvariety.
We recall the definition of the conormal variety $N^*X$
(or $N^*(X,M)$ when it is necessary to emphasize the ambient variety $M$)
of $X$ in $M$.
Let $X^{sm}$ be the smooth locus of $X$.
The conormal bundle $N^*X^{sm}\to X^{sm}$ is the vector bundle given by
\begin{align*}
N_x^*X^{sm}=\set{\alpha\in T_x^*M}{\alpha(v)=0\,\forall\,v\in T_xM},
\end{align*}
i.e., it is the vector bundle whose fibre at a point $x\in X^{sm}$ is the annihilator (in $T^*M$) of the tangent subspace $T_xX^{sm}$.
The closure of $N^*X^{sm}$ in $T^*M$ is called the conormal variety $N^*X$ of $X$ in $M$.

\subsection{Cotangent bundles and pull-backs}
\label{sec:TangentBundles}
Consider a smooth map $\gamma:D\to M$ to $M$ from some (Euclidean) neighborhood $D$ of $0\in\IC$.
Then $\gamma$ corresponds to a tangent vector at the point $\gamma(0)\in M$,
and every tangent vector can obtained in this manner.
We will call $\gamma$ a local curve in $M$.
Given a smooth map $f:M\to N$, we have an induced map $f_*:TM\to TN$ of tangent bundles.
If $v$ is the tangent vector corresponding to a curve $\gamma:D\to M$,
then $f_*(v)$ is the tangent vector corresponding to the curve $f\circ\gamma:D\to N$.

\label{sec:CotGen}
For $x\in M$, let $f_{*,x}:T_xM\to T_{f(x)}N$ denote the restriction of $f_*$ to the tangent space $T_xM$.
Let $f':f^*T^*N\to T^*N$ be the base change of the map $f:M\to N$ along the map $T^*N\to N$.
For $x\in M$, the fibre $(f^*T^*N)_x$ is precisely the cotangent space $T^*_{f(x)}N$.
Dual to the map $f_{*,x}$, we have a map $f^*_x:(f^*T^*N)_x\to T_x^*M$,
and hence a map $f^*:f^*T^*N\to T^*M$.
\begin{center}
\begin{tikzcd}
T^*M\arrow[dr] &\arrow[l,"f^*"']f^*T^*N\arrow[r,"f'"]\arrow[d]&T^*N\arrow[d]\\
&M\arrow[r,"f"]&N\arrow[ul,phantom,"\times"].
\end{tikzcd}
\end{center}

If $f:M\to N$ is an \emph{open immersion}, the map $f^*$ is an isomorphism,
and we have an open immersion $f_{\#}:T^*M\to T^*N$
given by $f_{\#}=f'\circ (f^*)^{-1}$.
\begin{center}
\begin{tikzcd}
T^*M\arrow[r,"(f^*)^{-1}"]\arrow[dr]&f^*T^*M\arrow[r,"f'"]\arrow[d]&T^*N\arrow[d]\\
&M\arrow[r,"f"]&N\arrow[ul,phantom,"\times"].
\end{tikzcd}
\end{center}
Equivalently, $T^*M$ is simply the restriction of $T^*N$ to $M$,
and $f_{\#}:T^*M\to T^*N$ is the inclusion map.

\subsection{Tangent and cotangent bundle of the Grassmannian}
\label{subCot}
Since $\g$ is a vector space, we have natural identifications
\begin{align}
\label{tangentM}
T\g=\g\times\g,&&T^*\g=\g\times\g^*.
\end{align}
The point $(x,y)\in \g\times \g=T\g$ corresponds to the local curve $t\mapsto x+ty$.

The trace form on $\g$ is a non-degenerate bilinear form,
and hence induces an isomorphism $\alpha:\g\to \g^*$.
For $y\in \g$, we write $\alpha_y\in \g^*$ for the linear functional given by $\alpha_y(x)=\tr(yx)$.

Let $E_{2n}$ be a $2n$-dimensional vector space with standard basis $e_1,\cdots,e_{2n}$,
and let $P\subset GL_{2n}$ denote the stabilizer of the subspace $E_{n}\subset E_{2n}$.
Let $\g_{2n}=Lie(GL_{2n})$, $\mathfrak p=Lie(P)$,
and let $\theta:\g\to\g_{2n}/\p$ denote the linear isomorphism
\begin{align*}
\theta(y)=
\begin{pmatrix}
0&0\\
y&0\\
\end{pmatrix}
(\mathrm{mod}\ \p),
\end{align*}
where the matrix on the right is in block form with blocks of size $n\times n$.
Recall the isomorphism $GL_{2n}/P=Gr(n,2n)$ given by $gP\mapsto gE_{n}$.
We have
\begin{align}
\label{GrassTangent}
T^*Gr(n,2n)=GL_{2n}\times^P(\g_{2n}/\p),
\end{align}
with $(g,\theta(y))\in GL_{2n}\times^P(\g_{2n}/\p)$ corresponding to the curve $t\mapsto g\,\Col{ty}$.

Let $\mathfrak u_P$ be the unipotent radical of $\mathfrak p$.
The trace form on $\g_{2n}$ induces a non-degenerate bilinear pairing $\tr:\g_{2n}/\p\times \mathfrak u_P\to \IC$.
Let $\theta^*:\g^*\to\mathfrak u_P$ denote the isomorphism
\begin{align*}
\theta^*(\alpha_y)=
\begin{pmatrix}
0&y\\
0&0\\
\end{pmatrix},
\end{align*}
where the matrices are in block form with blocks of size $n\times n$.
We have a commutative diagram,
\begin{center}
\begin{tikzcd}
\g\times \g^*\arrow[rr,"\theta\times\theta^*"]\arrow[dr]&&\g_{2n}/\p\times\mathfrak u_P\arrow[dl,"\tr"]\\
&\IC&
\end{tikzcd}
\end{center}
where the map $\g\times \g^*\to\IC$ is simply the evaluation map $(x,\alpha_y)\mapsto\alpha_y(x)=\tr(yx)$.

Dual to \cref{GrassTangent}, the cotangent bundle of $Gr(n,2n)$ is given by
\begin{align}
\label{cotAll}
T^*Gr(n,2n)=GL_{2n}\times^P\mathfrak u_P.
\end{align}

\begin{lemma}
\label{hHash}
Let $h_1:\g\to GL_{2n}$ be the map given by
\begin{align*}
h_1(x)=
\begin{pmatrix}
I(n)&0\\
x&I(n)\\
\end{pmatrix}.
\end{align*}
Recall from \cref{map:h} the graph embedding $h:\g\to Gr(n,2n)$. % given by
The map $h_{\#}:T^*\g\to T^*Gr(n,2n)$ is given by 
\begin{align*}
h_{\#}(x,\alpha_y)=(h_1(x),\theta^*(\alpha_y))\in GL_{2n}\times^P\mathfrak u_P.
\end{align*}
\end{lemma}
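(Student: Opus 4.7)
The plan is to compute $h_{\#}$ in two stages: first identify the differential $h_{*,x}:T_x\g\to T_{h(x)}Gr(n,2n)$ using the curve description of \cref{GrassTangent}, then dualize using the compatibility between the two trace pairings recorded in the commutative triangle of \cref{subCot}.  Since $h$ is an open immersion, the general discussion of \cref{sec:CotGen} gives $h_{\#}=h'\circ (h^*)^{-1}$, so it suffices to read off $(h^*_x)^{-1}$ pointwise.

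First I would fix $x\in\g$ and $y\in\g$, with $y$ representing the tangent vector at $x$ corresponding to the curve $t\mapsto x+ty$.  A direct block-matrix check shows that $h_1:\g\to GL_{2n}$ is a group homomorphism, and in particular $h_1(x+ty)=h_1(x)\cdot h_1(ty)$.  Consequently
\begin{align*}
h(x+ty)=h_1(x+ty)\,\e n=h_1(x)\cdot h_1(ty)\,\e n=h_1(x)\cdot\Col{ty},
\end{align*}
which is exactly the form of curve prescribed in the paragraph following \cref{GrassTangent}.  Therefore $h_{*,x}(y)=(h_1(x),\theta(y))\in GL_{2n}\times^P(\g_{2n}/\p)$.

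Next I dualize.  At the fixed base point $h(x)$, the cotangent fibre is $\{h_1(x)\}\times\mathfrak u_P$ by \cref{cotAll}, with the canonical evaluation pairing against the tangent fibre given by the trace.  A short block-matrix computation gives
\begin{align*}
\tr\bigl(\theta^*(\alpha_y)\cdot\theta(v)\bigr)=\tr\left(\begin{pmatrix}0&y\\0&0\end{pmatrix}\begin{pmatrix}0&0\\v&0\end{pmatrix}\right)=\tr(yv)=\alpha_y(v),
\end{align*}
which is precisely the compatibility encoded in the diagram of \cref{subCot}.  It follows that the adjoint $h^*_x$ sends $(h_1(x),\theta^*(\alpha_y))$ to $\alpha_y$, hence $(h^*_x)^{-1}(\alpha_y)=(h_1(x),\theta^*(\alpha_y))$.

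Combining the two steps then yields $h_{\#}(x,\alpha_y)=h'\circ(h^*)^{-1}(x,\alpha_y)=(h_1(x),\theta^*(\alpha_y))$, as claimed.  The main pitfall here is bookkeeping, namely keeping straight which of the two trace pairings is being invoked and reading off the dual map in the correct direction; but once the curve-level identification $h_{*,x}(y)=(h_1(x),\theta(y))$ is in hand, the lemma reduces to the single block-matrix identity displayed above.
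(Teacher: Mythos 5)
Your proof is correct and follows essentially the same route as the paper: compute $h_{*,x}(y)=(h_1(x),\theta(y))$ from the curve $t\mapsto x+ty$ via the factorization $\Col{x+ty}=h_1(x)\Col{ty}$, then pass to cotangent bundles. The only difference is that you make explicit the dualization step $\tr(\theta^*(\alpha_y)\theta(v))=\alpha_y(v)$, which the paper leaves implicit in the compatibility diagram of \cref{subCot}.
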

\begin{proof}
Consider the local curve $\gamma$ in \g\ given by $\gamma(t)=x+ty$.
We have
\begin{align*}
h\circ\gamma(t)=\Col{x+ty}=h_1(x) \Col{ty},
\end{align*}
We see that $h_*(x,y)=(h_1(x),\theta(y))\in GL_{2n}\times^P(\g_{2n}/\p)$,
and hence
\begin{align*}
h_{\#}(x,\alpha_y)=(h_1(x),\theta^*(\alpha_y))\in GL_{2n}\times^P\mathfrak u_P.
\end{align*}
\end{proof}

\begin{lemma}
Recall $\tau\in S_{2n}$ from \cref{tauMatrix},
and the associated  map $\tau:Gr(n,2n)\to Gr(n,2n)$ given by $V\mapsto \tau V$.
We have
$\tau_{\#}(g,x)=(\tau g,x)\in GL_{2n}\times^P\mathfrak u_P$.
\end{lemma}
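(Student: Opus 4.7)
The plan is to mirror the proof of \cref{hHash}: compute the induced map $\tau_*$ on tangent bundles using the local curve description, and then dualize using the trace pairing between $\g_{2n}/\p$ and $\mathfrak u_P$. Since $\tau$ acts on $Gr(n,2n)$ simply by left multiplication in $GL_{2n}$, no identification needs to be untangled beyond what is already set up in \cref{subCot}.

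First I would take a point $(g,\theta(y))\in GL_{2n}\times^P(\g_{2n}/\p)=TGr(n,2n)$, which by the description in \cref{subCot} corresponds to the local curve
\begin{align*}
\gamma(t)=g\,\Col{ty}
\end{align*}
based at $gE_n$. Composing with the automorphism $\tau:Gr(n,2n)\to Gr(n,2n)$, $V\mapsto \tau V$, yields
\begin{align*}
\tau\circ\gamma(t)=\tau g\,\Col{ty},
\end{align*}
which is precisely the local curve representing the tangent vector $(\tau g,\theta(y))$ at the point $\tau gE_n$. Hence $\tau_*(g,\theta(y))=(\tau g,\theta(y))$, i.e., $\tau_*$ translates the basepoint by $\tau$ and leaves the tangent direction $\theta(y)\in \g_{2n}/\p$ unchanged.

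To obtain $\tau_{\#}$, I dualize using the identification $T^*Gr(n,2n)=GL_{2n}\times^P\mathfrak u_P$ from \cref{cotAll}, where the pairing between the fibre $\g_{2n}/\p$ of $TGr(n,2n)$ and the fibre $\mathfrak u_P$ of $T^*Gr(n,2n)$ is given by the trace form on $\g_{2n}$. Since $\tau_*$ is the identity on the tangent direction component, its dual $\tau_{\#}$ (which, as $\tau$ is an isomorphism, equals $\tau'\circ(\tau^*)^{-1}$ by the general recipe of \cref{sec:CotGen}) is likewise the identity on the cotangent direction component $\mathfrak u_P$, only shifting the basepoint. This gives $\tau_{\#}(g,x)=(\tau g,x)$ as claimed.

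There is no real obstacle: the statement is essentially a tautology once one unwinds the $G\times^P$ description of $TGr(n,2n)$ and $T^*Gr(n,2n)$, and the only step that requires any care is checking that left multiplication acts trivially on the $\g_{2n}/\p$ (respectively $\mathfrak u_P$) factor, which is immediate from the way this factor was defined via the left-invariant trivialization at the identity coset.
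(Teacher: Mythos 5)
Your proof is correct and follows essentially the same route as the paper: compute $\tau_*$ on the local curve $\gamma(t)=g\,\Col{ty}$ to get $\tau_*(g,y)=(\tau g,y)$, then use that the trace pairing between $\g_{2n}/\p$ and $\mathfrak u_P$ is the same at every basepoint in the $GL_{2n}\times^P$ trivialization to conclude $\tau_{\#}(g,x)=(\tau g,x)$. No gaps.
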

\begin{proof}
At any point $gE_n\in Gr(n,2n)$, the pairing
\begin{align*}
T_{gE_n}Gr(n,2n)\times T^*_{gE_n}Gr(n,2n)\to\IC
\end{align*}
is given by $((g,y),(g,x))\mapsto\tr(yx)$.
The point $(g,x)\in G\times^P(\g/\p)$ corresponds
to the local curve $\gamma$ given by
$\gamma(t)=g Col\begin{pmatrix}I(n)\\ty\end{pmatrix}$.
We have $\tau\circ\gamma(t)=
\tau g Col\begin{pmatrix}I(n)\\ty\end{pmatrix}$,
and hence $\tau_*(g,y)=(\tau g,y)$.
It follows that $\tau_{\#}(g,x)=(\tau g,x)$.
\end{proof}

\subsection{The Springer map}
\label{sec:Springer}
The Springer map $\mu_{2n}:T^*Gr(n,2n)\to\g_{2n}$,
given by
\begin{align*}
&&\mu_{2n}(g,x)=Ad(g)x=gxg^{-1},&&(g,x)\in GL_{2n}\times^P\mathfrak u_P.
\end{align*}
yields a closed immersion,
\begin{align}
\label{grassEmbed}
j:GL_{2n}\times^P\mathfrak u_P\to\g_{2n},&&(g,x)\mapsto (gE_{n},gxg^{-1}),
\end{align}
cf. \cite{MR0263830} (see also \cite{MR1433132}).
We have %This yields an identification,
\begin{align}
\label{grassSpringer}
j(T^*Gr(n,2n))=\set{(V,x)\in Gr(n,2n)\times\g_{2n}}{\mathrm{Im}(x)\subset V\subset\ker(x)}.
\end{align}

A system of defining equations for the conormal variety of any Grassmannian Schubert variety was developed in \cite{MR4257099}.
We present the result in a language conducive to our needs.

\begin{prop}
\label{grassCon}
{\rm (cf. \cite[Thm B]{MR4257099})}
Let $Gr_{\underline u}\subset Gr(d,N)$ be a Grassmannian Schubert variety given by the conditions
\begin{align*}
Gr_{\underline u}=\set{V\in Gr}{\dim(V+E_{t'_i})\leq d+c_i\ \forall\,i}.
\end{align*}
for some integers $1\leq t'_1\leq\cdots\leq t'_k\leq N-1$
and $0\leq c_1\leq\cdots\leq c_k\leq N-d$.
A point $(V,x)\in j(T^*Gr(d,N))$ is in $j(N^*Gr_{\underline u})$ if and only if $V\in Gr_{\underline u}$ and 
\begin{align*}
{\dim(xE_{t'_i}/E_{t'_j})\leq \begin{cases}(t'_{i-1}-c_{i-1})-(t'_j-c_j),\\ c_i-c_{j+1},\end{cases}\ \forall\,i}.
\end{align*}
\end{prop}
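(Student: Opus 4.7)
The plan is to verify that the statement is a convenient reformulation of \cite[Thm B]{MR4257099}, and then to sketch the geometric content so as to be self-contained. First I would fix the identification given in \cref{grassSpringer}: the inclusion $j:T^*Gr(d,N)\hookrightarrow Gr(d,N)\times\g_N$ sends a cotangent vector at $V$ to a nilpotent operator $x$ satisfying $\mathrm{Im}(x)\subset V\subset\ker(x)$. Via the trace form this identifies $T^*_V Gr(d,N)$ with $\Hom(E_N/V,V)$, and the pairing between tangent and cotangent vectors becomes the natural pairing between $\Hom(V,E_N/V)$ and $\Hom(E_N/V,V)$. Under this dictionary, conormal directions are precisely the maps that annihilate the tangent space of the Schubert cell through $V$.

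Next I would compute the conormal fibre at the torus-fixed point $V_{\underline u}=\langle e_{u_1},\dots,e_{u_d}\rangle$ of $Gr_{\underline u}$. Using standard matrix coordinates for the open opposite cell at $V_{\underline u}$, the tangent space to $Gr^\circ_{\underline u}$ is read off from the shape of the Young diagram of $\underline u$, so its annihilator with respect to the trace pairing is cut out by an explicit set of vanishing entries on $x$. Re-expressing these vanishing conditions in terms of the distinguished subspaces $E_{t'_j}$ (the inside corners of the diagram) yields exactly the rank inequalities $\dim(xE_{t'_i}/E_{t'_j})\leq(t'_{i-1}-c_{i-1})-(t'_j-c_j)$ and $\dim(xE_{t'_i}/E_{t'_j})\leq c_i-c_{j+1}$. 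The two families arise respectively from the source constraint $V\subset\ker x$ propagated along the flag $E_\bullet$ and from the target constraint $\mathrm{Im}(x)\subset V$ propagated the other way. By semicontinuity, these inequalities persist on the closure $N^*Gr_{\underline u}$, giving the ``only if'' direction.

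For the ``if'' direction, let $Z\subset j(T^*Gr(d,N))$ denote the closed subvariety defined by the stated rank conditions together with $V\in Gr_{\underline u}$. I would argue that $Z=N^*Gr_{\underline u}$ by matching irreducibility and dimension. The natural candidate resolution is a Steinberg-type tower built from the flag $E_{t'_1}\subset\cdots\subset E_{t'_k}$: take $\widetilde Z$ to be the variety of tuples $(V,x,W_\bullet)$ where $W_\bullet$ is a compatible flag of subspaces refining both $V\cap E_{t'_i}$ and $V+E_{t'_i}$, and $x$ is a nilpotent with $\mathrm{Im}(x)\subset V$, $V\subset\ker(x)$, and $xW_i\subset W_{i-1}$ in the appropriate way. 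Then $\widetilde Z$ fibers as a vector bundle over a Bott--Samelson style resolution of $Gr_{\underline u}$, hence is smooth and irreducible of dimension $\dim Gr(d,N)$, and its image is easily checked to lie in $Z$ and surject onto $N^*Gr_{\underline u}$. Comparing dimensions forces $Z$ to coincide with $N^*Gr_{\underline u}$.

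The main obstacle is the combinatorial bookkeeping in the second step: verifying that the two bounds $(t'_{i-1}-c_{i-1})-(t'_j-c_j)$ and $c_i-c_{j+1}$ are simultaneously sharp and that no defining condition is missing. This requires tracking the interaction of the source and target filtrations on $x$ at every pair $j<i$ of inside corners; handling this cleanly is what necessitates the inductive analysis in \cite{MR4257099}. A secondary technical point is that the equations obtained may be non-reduced even though the underlying variety is correct, so the proposition is stated at the level of the set-theoretic variety, which is all that is needed for the applications in the next section.
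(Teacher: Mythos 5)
The paper offers no proof of \cref{grassCon}: it is imported, up to a change of notation, from \cite[Thm B]{MR4257099}, and is used later as a black box. So your write-up is necessarily an independent argument rather than a reconstruction of anything in this paper, and it has to be judged on its own. The ``only if'' half of your sketch is sound in outline: the rank conditions involve only the $B$-stable subspaces $\e{t'_j}$, hence are $B$-invariant closed conditions, so it suffices to check them on the conormal fibre at the torus-fixed point of the open cell and then pass to the closure; the translation of the annihilator of the cell's tangent space into the two stated families of inequalities is bookkeeping of the kind you describe.

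The genuine gap is in your ``if'' direction. You define $Z$ by the rank conditions together with $V\in Gr_{\underline u}$, build a Steinberg-type variety $\widetilde Z$ of triples $(V,x,W_\bullet)$, and then say that its image lies in $Z$, surjects onto $N^*Gr_{\underline u}$, and that ``comparing dimensions forces $Z$ to coincide with $N^*Gr_{\underline u}$.'' That last step does not follow: the stated properties of $\widetilde Z$ only re-derive the inclusion $N^*Gr_{\underline u}\subseteq Z$, which you already have from the first half. To conclude $Z\subseteq N^*Gr_{\underline u}$ you need either that $Z$ is irreducible of dimension $\dim Gr(d,N)$, or, equivalently here, that \emph{every} point $(V,x)\in Z$ lifts to $\widetilde Z$, i.e.\ that the rank inequalities actually force the existence of a compatible flag $W_\bullet$. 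A priori $Z$ could have extra irreducible components sitting over the boundary of $Gr_{\underline u}$, where the conditions $\dim(x\e{t'_i}/\e{t'_j})\leq\cdots$ are weakest, and nothing in your sketch rules this out. This lifting/irreducibility statement is exactly the nontrivial content of \cite[Thm B]{MR4257099}; you acknowledge deferring an inductive analysis to that paper, but you locate the difficulty in the sharpness bookkeeping of the ``only if'' step, whereas the argument that is actually missing is the converse inclusion. As written, the proposal proves only one of the two inclusions.
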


\subsection{The conormal variety of a covexillary matrix Schubert variety}
Let $w$ be a covexillary partial permutation, with essential set 
\begin{align*}
\ess(w)=\{(p_1+1,q_1),\cdots,(p_{m-1}+1,q_{m-1})\}
\end{align*}
for some
$0\leq p_1\leq\cdots\leq p_{m-1}\leq n-1$ and $1\leq q_1\leq\cdots\leq q_{m-1}\leq n$.
Let $r_i=\mathbf r^w_{ p_i-1,q_i }$,
\begin{align*}
Gr_{\underline v}&=\set{V\in Gr(n,2n)}{\dim(V+E_{t_i})\leq n+p_i+r_i},
\end{align*}
and let $\tau\in S_{2n}$
and $\tau\circ h:\g_w\hookrightarrow Gr_{\underline v}$ be as in \cref{prop:MSVtoGr}.
Applying \cref{grassCon} to $Gr_{\underline v}\subset Gr(n,2n)$,
we see that a point $(V,x)\in j(T^*Gr(n,2n))$
belongs to $j(N^*Gr_{\underline v})$ if and only if $V\in Gr_{\underline u}$
and 
\begin{align}
\label{grassCon2}
&&&&xE_{t_i}/E_{t_j}\leq
\begin{cases}
(q_{i-1}-r_{i-1})-(q_j-r_j)\\
(p_i+r_i)-(p_{j+1}+r_{j+1})\\
\end{cases}
&&\forall\,1\leq j<i<m.
\end{align}

\begin{thm}
\label{thm:conMat}
Consider the $2n\times 2n$ matrix $M$ given in block form by
\begin{align*}
M=
\begin{pmatrix}
yx  & y\\
xyx & xy
\end{pmatrix},
\end{align*}
and let $M(i,j)$ be the submatrix of $M$ spanned by the rows $\{q_j+1,\cdots,n,n+p_j+1,\cdots,2n\}$ and the columns $\{1,\cdots,q_i,n+1,\cdots n+p_i\}$.
We have $(x,\alpha_y)\in N^*\g_w$ if and only if $x\in\g_w$ and
\begin{align*}
&&\rk(M(i,j))\leq
\begin{cases}
(q_{i-1}-r_{i-1})-(q_j-r_j),\\
(p_i+r_i)-(p_{j+1}+r_{j+1}).\\
\end{cases}
&&\forall\, 1< j\leq i< m.
\end{align*}
\end{thm}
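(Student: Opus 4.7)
The plan is to lift the characterization to the Grassmannian via the open immersion of \cref{prop:MSVtoGr}, apply the Grassmannian conormal description of \cref{grassCon}, and translate the resulting rank conditions back to the matrix side. Because $\tau\circ h:\g_w\hookrightarrow Gr_{\underline v}$ is an open immersion, the induced map on cotangent bundles $(\tau\circ h)_{\#}=\tau_{\#}\circ h_{\#}$ is an open immersion as well, and $N^*\g_w$ is simply the restriction of $N^*Gr_{\underline v}$ along it (see \cref{sec:CotGen}). Hence $(x,\alpha_y)\in N^*\g_w$ if and only if $x\in\g_w$ and $(\tau_{\#}\circ h_{\#})(x,\alpha_y)\in N^*Gr_{\underline v}$.

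Composing with the Springer embedding $j$ of \cref{grassEmbed} and applying \cref{hHash} together with the subsequent lemma for $\tau_{\#}$, we obtain
\[
j(\tau_{\#}\circ h_{\#})(x,\alpha_y)=(\tau h(x),\,z),\quad\text{where}\quad z:=\tau\bigl(h_1(x)\theta^*(\alpha_y)h_1(x)^{-1}\bigr)\tau^{-1}.
\]
By \cref{prop:MSVtoGr} the membership $\tau h(x)\in Gr_{\underline v}$ is equivalent to $x\in\g_w$, so it suffices to transcribe the rank bounds of \cref{grassCon2}, applied to $(V,z)=(\tau h(x), z)$, into rank conditions on submatrices of $M$.

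A straightforward block-matrix calculation yields
\[
M_0:=h_1(x)\theta^*(\alpha_y)h_1(x)^{-1}=\begin{pmatrix}-yx & y\\-xyx & xy\end{pmatrix},
\]
which coincides with $M$ up to negation of the first $n$ columns; in particular any submatrix of $M_0$ has the same rank as the corresponding submatrix of $M$. Since $z=\tau M_0\tau^{-1}$, for every pair $1\leq j<i<m$ we have
\[
\dim(zE_{t_i}/E_{t_j})=\dim\bigl(M_0\tau^{-1}E_{t_i}/\tau^{-1}E_{t_j}\bigr).
\]
By \cref{tauInverseE}, $\tau^{-1}E_{t_i}$ is spanned by the basis vectors indexed by $\{1,\dots,q_i,n+1,\dots,n+p_i\}$; hence $M_0\tau^{-1}E_{t_i}$ is the column span of those columns of $M_0$, and passing to the quotient by $\tau^{-1}E_{t_j}$ discards the rows indexed by $\{1,\dots,q_j,n+1,\dots,n+p_j\}$. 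The surviving submatrix is precisely the analogue of $M(i,j)$ inside $M_0$, and its rank equals $\rk M(i,j)$; feeding this identification into \cref{grassCon2} produces the inequalities in the theorem.

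The principal obstacle is the bookkeeping in the last step: one has to track carefully how $\tau$ rearranges the standard coordinate subspaces and verify that this rearrangement, combined with the block-form conjugation computing $M_0$, delivers exactly the submatrices $M(i,j)$ with the row and column ranges stated in the theorem. Everything else is a direct application of machinery already established—the open-immersion reduction of \cref{sec:CotGen}, the formula of \cref{hHash}, and the Grassmannian characterization \cref{grassCon}—so once the indexing has been matched the theorem follows mechanically.
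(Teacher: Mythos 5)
Your proposal is correct and follows essentially the same route as the paper: reduce via the Cartesian square coming from the open immersion $\tau\circ h$, compute $j\circ\tau_{\#}\circ h_{\#}(x,\alpha_y)$ using \cref{hHash} and the $\tau_{\#}$ lemma to get the conjugate $\begin{pmatrix}-yx&y\\-xyx&xy\end{pmatrix}$, and then match the rank conditions of \cref{grassCon2} with the submatrices $M(i,j)$ via \cref{tauInverseE}. Your explicit remark that negating the first $n$ columns does not affect ranks (or column spans) is the only point the paper leaves implicit.
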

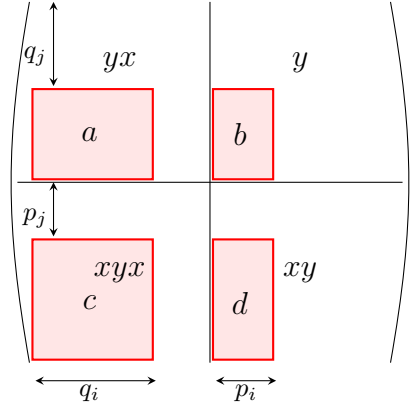
\begin{figure}[ht]
\begin{tikzpicture}[scale=0.8]
\draw (0,0)to[out=100,in=-100](0,6);
\draw (6,0)to[out=80,in=-80](6,6);
\draw (-0.2,3)--(6.2,3);
\draw (3,0)--(3,6);
\draw[red,thick,fill=red!10] (0.05,0.05) rectangle ++(2,2);
\draw[red,thick,fill=red!10] (0.05,3.05) rectangle ++(2,1.5);
\draw[red,thick,fill=red!10] (3.05,3.05) rectangle ++(1,1.5);
\draw[red,thick,fill=red!10] (3.05,0.05) rectangle ++(1,2);
\draw (1,-0.5) node {$q_i$};
\draw (0.1,-0.3)[stealth-stealth]--(2.1,-0.3);
\draw (3.6,-0.5) node {$p_i$};
\draw (3.1,-0.3)[stealth-stealth]--(4.1,-0.3);
\draw (0.1,2.4) node {$p_j$};
\draw (0.4,3)[stealth-stealth]--(0.4,2.1);
\draw (0.1,5.1) node {$q_j$};
\draw (0.4,6)[stealth-stealth]--(0.4,4.6);

\draw(1.5,1.5) node {\Large$xyx$};
\draw(4.5,1.5) node {\Large$xy$};
\draw(4.5,5) node {\Large$y$};
\draw(1.5,5) node {\Large$yx$};
%\draw (-1.3,3) node {$M(i,j)=$};
\draw (1,3.8) node {\Large$a$};
\draw (3.5,3.8) node {\Large$b$};
\draw (1,1) node {\Large$c$};
\draw (3.5,1) node {\Large$d$};
\end{tikzpicture}
\caption{The matrix $M(i,j)$ is the shaded portion of the block matrix $M=\begin{pmatrix} yx&y\\xyx&xy\end{pmatrix}$.}
\end{figure}

\begin{proof}
The map $\tau\circ h:\g_w\to Gr_{\underline v}$ is an open immersion,
hence we have the following Cartesian square,
\begin{center}
\begin{tikzcd}
N^*\g_w\arrow[d]\arrow[dr,phantom,"\times"]\arrow[r,hook,"\tau_{\#}\circ h_{\#}"]\arrow[d] & N^*Gr_{\underline v}\arrow[d]\\
\g_w \arrow[r,hook,"\tau\circ h"]                                                          & Gr_{\underline v}.
\end{tikzcd}
\end{center}
It follows that for $(x,\alpha_y)\in T^*\g$,
we have 
\begin{align*}
(x,\alpha_y)\in N^*\g_w&\iff\tau_{\#}\circ h_{\#}(x,\alpha_y)\in N^*Gr_{\underline v}\\
&\iff j\circ\tau_{\#}\circ h_{\#}(x,\alpha_y)\in j(N^*Gr_{\underline v}).
\end{align*}
Following \cref{iHash,hHash}, we see that
\begin{align*}
j\circ \tau_{\#}\circ h_{\#}(x,\alpha_y)
&=j(\tau h_1(x),\theta^*(y))\\
&=j\left(\tau
\begin{pmatrix}
I(n)&0\\
x&I(n)\\
\end{pmatrix},
\begin{pmatrix}
0&y\\
0&0\\
\end{pmatrix}
\right)\\
&=
\left(\tau \Col x,
\tau
\begin{pmatrix}
-yx  & y\\
-xyx & xy\\
\end{pmatrix}
\tau^{-1}
\right)\\
\end{align*}
Following \cref{grassCon2}, we have $j\circ\tau_{\#}\circ h_{\#}(x,\alpha_y)\in j(N^*Gr_{\underline v})$ if and only if
\begin{align}
\label{workTau}
\dim\left(\tau
\begin{pmatrix}
-yx  & y\\
-xyx & xy\\
\end{pmatrix}
\tau^{-1}\e{t_i}/\e{t_j}\right)\leq
\begin{cases}
(q_{i-1}-r_{i-1})-(q_j-r_j),\\
(p_i+r_i)-(p_{j+1}+r_{j+1}).\\
\end{cases}
\end{align}
Next, observe that
\begin{align*}
\dim\left(\tau
\begin{pmatrix}
-yx  & y\\
-xyx & xy\\
\end{pmatrix}
\tau^{-1}\e{t_i}/\e{t_j}\right)
&=
\dim\left(
\begin{pmatrix}
-yx  & y\\
-xyx & xy\\
\end{pmatrix}
\tau^{-1}\e{t_i}/\tau^{-1}\e{t_j}\right)\\
&=
\dim\left(
M\tau^{-1}\e{t_i}/\tau^{-1}\e{t_j}\right)
\end{align*}
Recall from \cref{tauInverseE} that
$
\tau^{-1}\e{t_i}=\left\langle e_1,\cdots,e_{q_i},e_{n+1},\cdots,e_{n+p_i}\right\rangle.
$
We deduce that the columns of the matrix $M(i,j)$ form a basis of 
$
M \tau^{-1}\e{t_i}/\tau^{-1}\e{t_j}$,
and hence the result follows from \cref{workTau}.
\end{proof}

\section{The conormal variety of a covexillary Schubert variety}
Let $\iota:G\to \g$ denote the inclusion of $GL_n$ into the set of $n\times n$ matrices.
Recall the map $\pi:G\twoheadrightarrow\Fl$ given by $g\mapsto gE_\bullet$,
and the corresponding Cartesian square,
\begin{equation}
\label{piStarMap}
\begin{tikzcd}
\pi^*T^*\Fl\arrow[d]\arrow[r,"\pi'",->>] & T^*\Fl\arrow[d]\\
G\arrow[r,"\pi",->>]                     & \Fl\arrow[ul,phantom,"\times"].
\end{tikzcd}
\end{equation}
Since $\pi:G\to\Fl$ is a smooth morphism,
for any subvariety $X\subset\Fl$,
we have $\pi'^{-1}(N^*X)=N^*(\pi^{-1}(X))$,
see \cite[p. 65]{HTT}.
Applied to $X=\Fl_w$,
this yields the following diagram of Cartesian squares,
\begin{equation}
\label{bigCommDiag}
\begin{tikzcd}
N^*\Fl_w\arrow[d]\arrow[dr,phantom,"\times"]     & N^*G_w\arrow[d]\arrow[l,"\pi'"',->>]\arrow[dr,phantom,"\times"]\arrow[r,hook,"\iota_{\#}"]\arrow[d] & N^*\g_w\arrow[d]\\
\Fl_w\arrow[d,hook]  \arrow[dr,phantom,"\times"] & \arrow[l,"\pi"',->>]G_w\arrow[d,hook] \arrow[dr,phantom,"\times"]\arrow[r,hook,"\iota"]                  & \g_w\arrow[d,hook]\\
\Fl                                              & G\arrow[l,"\pi"',->>]\arrow[r,hook,"\iota"]                                                              & \g.
\end{tikzcd}
\end{equation}

In this section, we use \cref{thm:conMat,bigCommDiag} to give a characterization of $N^*\Fl_w$
when $w$ is a covexillary permutation.

Let $w\in S_n$ be a covexillary permutation,
and let $\tau\in S_{2n}$ be as in \cref{tauMatrix}.
Recall that $\g$ is naturally identified with the tangent space of $G$ at identity;
the point $x\in \g$ corresponds to the local curve $t\mapsto 1+tx$.
By $G$-equivariance, we have 
$
TG=G\times \g,
$
with the point $(g,x)\in G\times \g$ corresponding to the curve $t\mapsto g(1+tx)$.
Dually, we have $T^*G=\g\times\g^*$.
Using the trace form on \g,
we have an isomorphism $\alpha:\g\to\g^*$,
given by $\alpha_y=(x\mapsto\tr(yx))$,
see \cref{subCot}.

\begin{lemma}
\label{iHash}
Consider the open immersion $\iota:G\to \g$.
We have $\iota_{\#}(g,\alpha_y)=(g,\alpha_{yg^{-1}})$.
\end{lemma}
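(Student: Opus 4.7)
The plan is to track the identifications carefully: left trivialization on the $G$ side, affine (vector space) trivialization on the $\g$ side, and the trace form on both to convert covectors to elements of $\g$. Because $\iota$ is an open immersion, the definition recalled in \cref{sec:CotGen} says that $\iota_{\#,g}$ is exactly $(\iota^*_g)^{-1}$, so the problem reduces to computing $\iota_{*,g}$ and taking its transpose with respect to the trace pairing.

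First I would compute the pushforward on tangent spaces. A point $x\in\g=T_gG$ in the left-trivialization corresponds to the local curve $\gamma(t)=g(1+tx)$ in $G$. Viewed inside $\g$, this is $\gamma(t)=g+t(gx)+O(t^2)$, whose tangent vector in the affine identification $T_g\g=\g$ is $gx$. Hence $\iota_{*,g}(x)=gx$ for all $x\in\g$.

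Next I would dualize. For $\alpha_y\in T^*_g\g=\g^*$ and $x\in T_gG=\g$, the definition of pullback of covectors gives
\begin{align*}
(\iota^*_g\alpha_y)(x)=\alpha_y(\iota_{*,g}(x))=\alpha_y(gx)=\tr(y\,gx)=\tr((yg)\,x)=\alpha_{yg}(x),
\end{align*}
so $\iota^*_g(\alpha_y)=\alpha_{yg}$. Inverting this map (and relabeling) gives $\iota_{\#,g}(\alpha_{y})=\alpha_{yg^{-1}}$, which yields the desired formula $\iota_{\#}(g,\alpha_y)=(g,\alpha_{yg^{-1}})$.

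The only place where care is needed is making sure the two trivializations of $T^*G$ and $T^*\g$ are being used consistently with the conventions of \cref{sec:TangentBundles,subCot}; once that bookkeeping is pinned down, the computation is a one-line application of cyclicity of trace. There is no real obstacle beyond this bookkeeping.
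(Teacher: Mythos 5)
Your proposal is correct and follows essentially the same route as the paper: compute $\iota_{*,g}(x)=gx$ via the curve $\gamma(t)=g(1+tx)$, use cyclicity of trace to get $\iota^*_g(\alpha_y)=\alpha_{yg}$, and invert to obtain $\iota_{\#,g}(\alpha_y)=\alpha_{yg^{-1}}$. No substantive differences.
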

\begin{proof}
Recall from \cref{sec:TangentBundles} the induced map $\iota_*:TG\to T\g$ on tangent bundles.
Consider the local curve $\gamma$ given by $\gamma(t)=g(1+tx)$;
we have $\iota\circ\gamma(t)=g+tgx$,
and hence $\iota_*(g,x)=(g,gx)$.

Since $\alpha_y(gx)=\tr(ygx)=\alpha_{yg}(x)$,
the dual map $\iota^*_g:T^*_g\g\to T^*_gG$ is given by $\alpha_y\mapsto\alpha_{yg}$.
Following \cref{sec:CotGen}, $\iota_{\#,g}:T^*_gG\to T^*_g\g$ is the inverse of the isomorphism $\iota^*_g$,
and hence is given by $\iota_{\#,g}(\alpha_y)=\alpha_{yg^{-1}}$.
It follows that $\iota_{\#}(g,\alpha_y)=(g,\alpha_{yg^{-1}})$.
\end{proof}

Let \bo\ be the Lie algebra of $B$,
and let $\mathfrak u_B$ be the unipotent radical of \bo.
Following the isomorphism $G/B\overset\sim\to\Fl$ given by $gB\mapsto gE_\bullet$,
we see that $T_{E_\bullet}\Fl=\g/\bo$,
and hence by $G$-equivariance, $T\Fl=G\times^B\g/\bo$.
The trace form on $\g$ induces a non-degenerate bilinear form $\g/\bo\times\mathfrak u_B\to\IC$,
and hence we have the identifications $\mathfrak u=(\g/\bo)^*$ and $T^*\Fl=G\times^B\mathfrak u$.

Recall the Springer map $\mu:T^*G/B\to\g$, given by $(g,y)\mapsto gyg^{-1}$.
Following \cite{MR0263830} (see also \cite{MR1433132}), we have an embedding
\begin{align}
\label{SpringerFlag}
j:T^*\Fl\hookrightarrow\Fl\times\g,&&
(g,y)\mapsto(gE_\bullet,gyg^{-1}).
\end{align}
The image of $T^*\Fl$ under this map has the following description:
\begin{equation*}
j(T^*\Fl)=\set{(F_\bullet,z)\in\Fl\times\g}{zF_i\subset F_{i-1}\ \forall\,i}.
\end{equation*}

\begin{lemma}
We have an identification $\pi^*T^*\Fl=G\times\mathfrak u_B$,
with the inclusion $\pi^*T^*\Fl\hookrightarrow T^*G$ given by $(g,y)\mapsto (g,\alpha_y)$.
Moreover, the map $\pi':\pi^*T^*\Fl\to T^*\Fl$ (see \cref{piStarMap})
is the quotient map
\begin{align*}
G\times\mathfrak u_B\to G\times^B\mathfrak u_B,&&
(g,y)\mapsto (g,y)(\mathrm{mod}\ B).
\end{align*}
\end{lemma}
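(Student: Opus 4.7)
The plan is to unwind the two identifications by choosing canonical representatives and then tracing through the left-trivialization of the cotangent bundles of $G$ and $\Fl$.

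First I would set up $\pi^*T^*\Fl$ explicitly as the fiber product
\begin{align*}
\pi^*T^*\Fl = \set{(g,(h,y)\,\mathrm{mod}\,B)\in G\times(G\times^B\mathfrak u_B)}{gE_\bullet=hE_\bullet}.
\end{align*}
The condition $gE_\bullet=hE_\bullet$ says exactly that $h\in gB$, so every class in the second slot has a unique representative of the form $(g,y')$ with $y'\in\mathfrak u_B$: if $h=gb$, then $(h,y)\equiv(g,\mathrm{Ad}(b)y)\,\mathrm{mod}\,B$, and $\mathfrak u_B$ is $\mathrm{Ad}(B)$-stable. This yields the trivialization $\pi^*T^*\Fl\overset\sim\to G\times\mathfrak u_B$, and under this trivialization the map $\pi':\pi^*T^*\Fl\to T^*\Fl=G\times^B\mathfrak u_B$ is visibly the quotient map $(g,y)\mapsto(g,y)\,\mathrm{mod}\,B$, taking care of the second half of the lemma.

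Next I would identify the inclusion $\pi^*T^*\Fl\hookrightarrow T^*G$, which is dual to $\pi_{*,g}:T_gG\to T_{gE_\bullet}\Fl$, by computing $\pi_*$ in the left-trivialized coordinates $TG=G\times\g$ and $T\Fl=G\times^B(\g/\bo)$. A tangent vector $(g,x)\in T_gG$ corresponds to the local curve $t\mapsto g(1+tx)$, and pushing it forward along $\pi$ gives the curve $t\mapsto g(1+tx)E_\bullet$, whose class in $G\times^B(\g/\bo)$ is $(g,x\,\mathrm{mod}\,\bo)$. Thus $\pi_{*,g}(g,x)=(g,x\,\mathrm{mod}\,\bo)$.

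Dualizing, a cotangent vector at $gE_\bullet$ represented by $(g,y)\in G\times^B\mathfrak u_B$ pairs with $(g,x)\in T_gG$ via the trace form, yielding $\tr(yx)=\alpha_y(x)$. Therefore the pullback covector on $T_gG$ is $\alpha_y$, and the inclusion $\pi^*T^*\Fl\hookrightarrow T^*G=G\times\g^*$ is precisely $(g,y)\mapsto(g,\alpha_y)$ as claimed. The main bookkeeping obstacle is simply verifying that the $B$-equivariance of the pairing $\g/\bo\times\mathfrak u_B\to\IC$ is consistent with the two trivializations (left-trivialization on $G$ versus quotient trivialization on $\Fl$); this follows because $\mathrm{Ad}(B)$ preserves $\mathfrak u_B$ and acts trivially on the quotient pairing induced by $\tr$, so the map is well-defined on the quotient $G\times^B\mathfrak u_B$ and agrees with the restriction of the trivialization of $T^*G$ to the subbundle $G\times\mathfrak u_B$.
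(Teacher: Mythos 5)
Your argument is correct and follows essentially the same route as the paper: the paper identifies $\pi^*T^*\Fl$ with $G\times\mathfrak u_B$ via the uniqueness of fibre products (your canonical-representative computation just makes that Cartesian square explicit, and immediately gives that $\pi'$ is the quotient map), and it identifies the inclusion into $T^*G$ by observing that $(g,y)\mapsto(g,\alpha_y)$ carries $\{g\}\times\mathfrak u_B$ onto the annihilator of $T_g(gB)$ in $T^*_gG$. Your explicit computation of $\pi_{*,g}$ via the curve $t\mapsto g(1+tx)E_\bullet$ followed by dualizing with the trace pairing is a slightly more detailed rendering of that same step, and is consistent with the paper's conventions.
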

\begin{proof}
It is clear that we have Cartesian squares,
\begin{center}
\begin{tikzcd}
G\times\mathfrak u_B\arrow[r]\arrow[d]&G\times^B\mathfrak u_B\arrow[d]\\
G\arrow[r,"\pi"]&\Fl\arrow[lu,phantom,"\times"],
\end{tikzcd}
\qquad\qquad
\begin{tikzcd}
\pi^*T^*\Fl\arrow[r]\arrow[d]&G\times^B\mathfrak u_B\arrow[d]\\
G\arrow[r,"\pi"]&\Fl\arrow[lu,phantom,"\times"].
\end{tikzcd}
\end{center}
By the uniqueness of fibre products, we deduce an isomorphism identifying the pull-back $\pi^*T^*\Fl$ with $G\times\mathfrak u_B$.
Further, the map $(g,y)\mapsto (g,\alpha_y)$
sends ${g}\times\mathfrak u_B$ to the annihilator of $T_g(gB)$ (in $T^*_gG$),
and hence can be identified with the inclusion $\pi^*T^*\Fl\hookrightarrow T^*G$.
\end{proof}

\begin{thm}
\label{thm:conCov}
Consider $(F_\bullet,z)\in j(T^*\Fl)$.
We have $(F_\bullet,z)\in j(N^*\Fl_w)$ if and only if $F_\bullet\in \Fl_w$ and 
\begin{align}
\label{work4}
\dim(z(F_{q_i}+E_{p_i})/(F_{q_j}\cap E_{p_j}))\leq
\begin{cases}
(q_{i-1}-r_{i-1})-(q_j-r_j)\\
(p_i+r_i)-(p_{j+1}+r_{j+1})\\
\end{cases}
\end{align}
for all $1\leq i<j<m$.
\end{thm}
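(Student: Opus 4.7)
The plan is to climb the Cartesian squares in \cref{bigCommDiag}, starting from the explicit rank description of $N^*\g_w$ in \cref{thm:conMat} and translating it into the subspace conditions \cref{work4} on the flag variety.

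First, I would fix $(F_\bullet, z) \in j(T^*\Fl)$ and write $F_\bullet = gE_\bullet$, $z = gyg^{-1}$ with $(g, y) \in G \times \mathfrak u_B = \pi^*T^*\Fl$. Since $\pi:G\to\Fl$ is smooth, $\pi'^{-1}(N^*\Fl_w) = N^*G_w$, and since $\iota:G\hookrightarrow\g$ is an open immersion, $N^*G_w$ is the restriction of $N^*\g_w$ along $\iota_{\#}$. Invoking \cref{iHash}, the point $(g, \alpha_y) \in N^*G_w$ if and only if $(g, \alpha_{yg^{-1}}) \in N^*\g_w$. Applying \cref{thm:conMat} with $x := g$ and $y' := yg^{-1}$ reduces the question to showing that $g \in \g_w$ (equivalently $F_\bullet \in \Fl_w$, because $F_j = gE_j$ for all $j$) together with the stated rank bounds on the submatrices $M(i,j)$ of the $2n\times 2n$ matrix $M = \begin{pmatrix} y'g & y' \\ gy'g & gy' \end{pmatrix}$.

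Next, I would substitute $y = g^{-1}zg$ and observe the clean factorization
\[
M = \begin{pmatrix} g^{-1}zg & g^{-1}z \\ zg & z \end{pmatrix} = \begin{pmatrix} g^{-1} \\ I \end{pmatrix} z \begin{pmatrix} g & I \end{pmatrix}.
\]
Using \cref{tauInverseE} for $\tau^{-1}E_{t_i}$, the image $M\tau^{-1}E_{t_i}$ can be read off in three stages: applying $\begin{pmatrix} g & I \end{pmatrix}$ produces $gE_{q_i} + E_{p_i} = F_{q_i} + E_{p_i}$; applying $z$ yields $z(F_{q_i}+E_{p_i})$; and applying $\begin{pmatrix} g^{-1} \\ I \end{pmatrix}$ embeds the result isomorphically as the twisted graph $\{(g^{-1}w, w) : w \in z(F_{q_i}+E_{p_i})\} \subset E_{2n}$. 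A vector $(g^{-1}w, w)$ additionally lies in $\tau^{-1}E_{t_j} = E_{q_j} \oplus \langle e_{n+1},\ldots,e_{n+p_j}\rangle$ precisely when $g^{-1}w \in E_{q_j}$ and $w \in E_{p_j}$, i.e., when $w \in gE_{q_j} \cap E_{p_j} = F_{q_j}\cap E_{p_j}$. Combining these dimension counts gives
\[
\rk M(i,j) = \dim(M\tau^{-1}E_{t_i}/\tau^{-1}E_{t_j}) = \dim(z(F_{q_i}+E_{p_i})/(F_{q_j}\cap E_{p_j})),
\]
using the convention $V/V'$ for the image of $V$ in $E_n/V'$, and substitution into the bounds from \cref{thm:conMat} then yields exactly \cref{work4}.

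The main technical obstacle is ensuring that the rank-of-submatrix language of \cref{thm:conMat} lines up correctly with the subspace-quotient language of \cref{work4} after the change of variables $y = g^{-1}zg$. The payoff of the factorization $M = \begin{pmatrix} g^{-1} \\ I \end{pmatrix} z \begin{pmatrix} g & I \end{pmatrix}$ is that its image is confined to a twisted graph of $g^{-1}$, so the intersection with the standard coordinate flag $\tau^{-1}E_{t_j}$ collapses cleanly to $F_{q_j}\cap E_{p_j}$; this is the step that carries the Lie-theoretic content of the theorem.
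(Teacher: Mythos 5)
Your proposal is correct and follows the paper's own route: climb \cref{bigCommDiag}, lift $(F_\bullet,z)$ to $(g,y)\in G\times\mathfrak u_B$, use \cref{iHash} and \cref{thm:conMat}, and then identify $\rk M(i,j)$ with $\dim(z(F_{q_i}+E_{p_i})/(F_{q_j}\cap E_{p_j}))$. The only difference is cosmetic, in that last identification: you factor $M$ as the composite of $(v,w)\mapsto gv+w$, then $z$, then $w\mapsto(g^{-1}w,w)$, and intersect the resulting twisted graph with $\tau^{-1}E_{t_j}$, whereas the paper reaches the same dimension count via the automorphism $\widetilde g$ and the diagonal embedding $\kappa$, quotienting by $F_{q_j}\oplus E_{p_j}$ --- your factorization is an equivalent, slightly more streamlined packaging of the same linear algebra.
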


\begin{proof}
Consider the commutative diagram (see \cref{bigCommDiag}),
\begin{center}
\begin{tikzcd}
      & N^*G_w\arrow[dl,hook']\arrow[r,->>,"\pi'"]\arrow[d,hook]\arrow[dr,phantom,"\times"] & N^*\Fl_w\arrow[d,hook]\arrow[dr,hook]&\\
T^*G  & \arrow[l,hook']\pi^*T^*\Fl\arrow[r,->>,"\pi'"]                                      & T^*\Fl\arrow[r,hook,"j"]             & \Fl\times\g.
\end{tikzcd}
\end{center}
Since the induced map $\pi':N^*G_w\to N^*\Fl_w$ is surjective,
we have $(F_\bullet,z)\in j(N^*\Fl_w)$ if and only if
there exists $(g,y)\in N^*G_w$ such that
\begin{align*}
j\circ\pi'(g,y)=(gE_\bullet,gyg^{-1})=(F_\bullet,z). %\in\Fl\times\g.
\end{align*}
Following \cref{iHash,bigCommDiag},
we see that $(g,y)\in N^*G_w$ if and only if $\iota_{\#}(g,y)=(g,\alpha_{yg^{-1}})\in N^*\g_w$. 
Consider the matrix $M$ given in block form by
\begin{align*}
M=
\begin{pmatrix}
y  & yg^{-1}\\
gy & gyg^{-1}\\
\end{pmatrix}
=
\begin{pmatrix}
g^{-1}zg & g^{-1}z\\
zg       & z\\
\end{pmatrix},
\end{align*}
and let $M(i,j)$ be the submatrix of $M$ spanned by the rows $\{q_j+1,\cdots,n,n+p_j+1,\cdots,2n\}$ and the columns $\{1,\cdots,q_i,n+1,\cdots n+p_i\}$.
Following \cref{thm:conMat}, we have $(g,\alpha_{yg^{-1}})\in N^*G_w$ if and only if $g\in G_w$ and
\begin{align}
\label{work3}
&&&&\rk(M(i,j))\leq
\begin{cases}
(q_{i-1}-r_{i-1})-(q_j-r_j)\\
(p_i+r_i)-(p_{j+1}+r_{j+1})\\
\end{cases}
&&\forall\,1\leq i<j<m.
\end{align}
We finish the proof by showing that \cref{work3,work4} are equivalent. 

Consider the automorphism $\widetilde g$ of $\e n\oplus\e n$ given by $(v,w)\mapsto(gv,w)$.
Observe that $\widetilde g(E_{q_j}\oplus E_{p_j})=F_{q_j}\oplus E_{p_j}$,
and hence $\widetilde g$ descends to a map $\widetilde g_0$ satisfying the commutative diagram,
\begin{center}
\begin{tikzcd}
\e n\oplus\e n\arrow[r,"\widetilde g"]\arrow[d,"\eta"]&\e n\oplus\e n\arrow[d,"\eta'"]\\
(\e n\oplus\e n)/(\e{q_j}\oplus\e{p_j})\arrow[r,"\widetilde g_0"]&(\e n\oplus\e n)/(F_{q_j}\oplus \e{p_j}),
\end{tikzcd}
\end{center}
where $\eta$, $\eta'$ are the quotient maps.

Let $\kappa:\e n\to \e n\oplus\e n$ denote the diagonal embedding $v\mapsto(v,v)$.
Observe that
\begin{align*}
&&\begin{pmatrix}g^{-1}z\\z\end{pmatrix}
&&\text{and}&&
&&\begin{pmatrix}g^{-1}zg\\zg\end{pmatrix}
\end{align*}
are precisely the matrices of the maps $\widetilde g\kappa z$ and $\widetilde g\kappa z g$ respectively
(with respect to the ordered bases $(e_1,\cdots,e_{n})$ of \e n,
and $((e_1,0),\cdots,(e_n,0),(0,e_1),\cdots,(0,e_n))$ of $\e n\oplus\e n$).
Moreover, removing the columns indexed $\{1,\cdots,q_j,n+1,\cdots,n+p_j\}$ from $M$
corresponds to quotienting $E_n\oplus E_n$ by the subspace $E_{q_j}\oplus E_{p_j}$.

It follows that first $q_i$ columns of $M(i,j)$ span the subspace
\begin{align*}
\eta\widetilde g^{-1}\kappa z gE_{q_i}=\eta\widetilde g^{-1}\kappa z F_{q_i}=\widetilde g_0^{-1}\eta' \kappa zF_{q_i},
\end{align*}
and the last $p_i$ columns of $M(i,j)$ span the subspace
\begin{align*}
\eta\widetilde g^{-1}\kappa z E_{p_i}=\eta\widetilde g^{-1}\kappa z E_{p_i}=\widetilde g_0^{-1}\eta' \kappa zE_{p_i}.
\end{align*}
Consequently, we have
\begin{align*}
\rk(M(i,j))&=\dim(\widetilde g_0^{-1}\eta'\kappa z(F_{q_i}+E_{p_i}))\\
&=\dim(\eta'\kappa z(F_{q_i}+E_{p_i}))\\
&=\dim(\kappa z(F_{q_i}+E_{p_i})/(F_{q_j}\oplus E_{p_j})).
\end{align*}
Finally, since $\kappa:E_n\to E_n\oplus E_n$ is the diagonal embedding,
we have
\begin{align*}
\operatorname{Im}(\kappa)\cap (F_{q_j}\oplus E_{p_i})=\kappa(F_{q_j}\cap E_{p_i}),
\end{align*}
and hence
\begin{align*}
\kappa z(F_{q_i}+E_{p_i})\cap (F_{q_j}\oplus E_{p_i})=
\kappa(z(F_{q_i}+E_{p_i})\cap F_{q_j}\cap E_{p_i}).
\end{align*}
It follows that
\begin{align*}
\dim(z(F_{q_i}+E_{p_i})/(F_{q_j}\cap E_{p_j}))&=\dim(\kappa z(F_{q_i}+E_{p_i})/(F_{q_j}\oplus E_{p_j}))\\
&=\rk(M(i,j)),
\end{align*}
and hence \cref{work4} is equivalent to \cref{work3}.
\end{proof}

\bibliographystyle{alpha}
\bibliography{biblio}

\end{document}